\tikzset{
pattern size/.store in=\mcSize, 
pattern size = 5pt,
pattern thickness/.store in=\mcThickness, 
pattern thickness = 0.3pt,
pattern radius/.store in=\mcRadius, 
pattern radius = 1pt}
\pgfpoint{\mcSize}{\mcSize}}
\pgfpoint{\mcSize}{\mcSize}}
\newcommand{\ceil}[1]{\left\lceil #1 \right\rceil}
\newcommand{\prs}[1]{\left( #1 \right)}
\newcommand{\crb}[1]{\left\{ #1 \right\}}
\tikzset{
pattern size/.store in=\mcSize, 
pattern size = 5pt,
pattern thickness/.store in=\mcThickness, 
pattern thickness = 0.3pt,
pattern radius/.store in=\mcRadius, 
pattern radius = 1pt}
\pgfpoint{\mcSize}{\mcSize}}
\tikzset{every picture/.style={line width=0.75pt}}
\theoremstyle{plain}
\newtheorem{theorem}{Theorem}[section]
\newtheorem{lemma}[theorem]{Lemma}
\newtheorem{proposition}[theorem]{Proposition}
\newtheorem{corollary}[theorem]{Corollary}
\theoremstyle{definition}
\newtheorem{definition}[theorem]{Definition}
\newtheorem{example}[theorem]{Example}
\newtheorem{construction}[theorem]{Construction}
\theoremstyle{remark}
\newtheorem{remark}[theorem]{Remark}
\newcommand{\C}{\mathbb{C}}
\newcommand{\R}{\mathbb{R}}
\newcommand{\Z}{\mathbb{Z}}
\newcommand{\N}{\mathbb{N}}
\newcommand{\Q}{\mathbb{Q}}
\newcommand{\di}{dimension}
\title{Multi-Dimensional Cohomological Phenomena in the Lower Multiparametric Model}
\author{Jon V. Kogan\footnote{Department of Mathematics, Hebrew University, Jerusalem 91904, Israel. e-mail:
jonatan.kogan$@$mail.huji.ac.il. Supported by ERC grant 3012006831 and ISF grant 3011003773}}
\date{\vspace{-5ex}}
\begin{document}

\maketitle

%%%%%% Abstract %%%%%%

\begin{abstract}
In the past two decades, extensive research has been conducted on the (co)homology of various models of random simplicial complexes. 
So far, it
has always been examined merely as a list of groups. This paper expands upon this by describing both the ring structure and the Steenrod-algebra structure of the cohomology of the lower multiparametric model.
We prove that the ring structure is always a.a.s trivial, while, for certain parameters, the Steenrod-algebra a.a.s acts non-trivially.
This reveals that complex multi-dimensional topological structures appear as subcomplexes of this model.
\end{abstract}
\section{Introduction}
\subsection{Background}
Random simplicial complexes and their topological properties have been an active area of study in recent years (see, for instance, \cite[chapter 22]{HandbookDnCG3}, or \cite{Bobrowski_2022}). A \textbf{model} of random complexes is an assignment of a probability distribution on simplicial complexes with $n$ vertices, for each $n$.
Two such models that have received extensive study are the clique complex of a random graph from $G(n,p)$ (\cite{KAHLE09}), and the Linial-Meshulam-Wallach model, first appearing in \cite{MeshulamWallach} 
(the 2-\di al version first appearing in \cite{LinialMeshulam}), comprised of the complete $m$-skeleton on $n$ vertices, and a random set of $m+1$-simplices.
Interpolating between these is the \textbf{Lower Multiparametric Model}:
\begin{definition}\label{defLMM}
Define the distribution $X(n;p_1,p_2,...)$ on hypergraphs with $n$ vertices by including faces independently, where facets of \di \ $i$ are included with probability $p_i$.
This results in a distribution on simplicial complexes, $\underline{X}(n;p_1,p_2,...)$, defined by sending a hypergraph from $X(n;p_1,p_2,...)$ to the maximal simplicial complex contained in it. This distribution is called the \textbf{Lower Multiparametric Model} or the \textbf{Lower Model} for short.
 \end{definition}
$\underline{X}(n;p_1,p_2,...)$ was first introduced in \cite{CostaFarber}, and may be thought of as the distribution on simplicial complexes with $n$ vertices, where each edge is included independently with probability $p_1$, each triangle whose boundary was formed in the first step is filled independently with probability $p_2$, each tetrahedron boundary gets filled with probability $p_3$ and so on. \par
 
 $\underline{X}(n;p_1,p_2,...)$ interpolates between the clique complex and the Linial-Meshulam-Wallach model (with parameters $(p,1,1,...),(1,1,...,1,p_r,0,...)$ respectively). 
 In both these models, cohomology is concentrated in one \di \ (see \cite{Bobrowski_2022}). However, in \cite{Fowler19}, Fowler studied the asymptotic behavior of the cohomology of the lower multiparametric model with coefficients in $\Q$, and found that multiple cohomologies can be non-zero for the same parameters. By "a.a.s" we mean "asymptotically almost surely"- with probability approaching 1 as $n\to \infty$. See \Cref{aas} for the precise definition.
 \begin{theorem}[Fowler]\label{thmFowler}
    Let $p_i=n^{-\alpha_i}$, $\alpha_i\in\R_{\ge 0}$. Denote: $$S_1^k(\{\alpha_i\}_i)=\sum_{i=1}^{k+1}\binom{k+1}{i}\alpha_i\qquad S_2^k(\{\alpha_i\}_i)= \sum_{i=1}^{k}\binom{k+2}{i+1}\alpha_i$$
        Then:
        \begin{itemize}
            \item If $S_1^k(\{\alpha_i\}_i)<1$, then $H^k(\underline{X},\Q)=0$ a.a.s.
            \item If $S_2^k(\{\alpha_i\}_i)>k+2$, then $H^k(\underline{X},\Z)=0$ a.a.s.
            \item If $S_1^k(\{\alpha_i\}_i)\geq 1$ and $S_2^k(\{\alpha_i\}_i)<k+2$ and all $\alpha_i$ are positive, then $H^k(\underline{X},\Q)\neq 0$ a.a.s.
        \end{itemize}
\end{theorem}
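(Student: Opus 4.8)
The plan is to reduce all three statements to two explicit probabilities and then run moment arguments for the existence statements and a cocycle-vanishing argument for the hardest vanishing statement. First I would record the two quantities that govern the local geometry of $\underline{X}$ around dimension $k$. Since in the lower model a simplex lies in $\underline{X}$ iff every one of its faces was selected in the hypergraph $X$, a direct computation shows that for a fixed already-present $k$-simplex $\tau$ and a fixed extra vertex $v$, the cone $v * \tau$ lies in $\underline{X}$ with probability $\prod_{i=1}^{k+1} p_i^{\binom{k+1}{i}} = n^{-S_1^k}$, and that the full boundary $k$-sphere of a potential $(k+1)$-simplex is present with probability $\prod_{i=1}^{k} p_i^{\binom{k+2}{i+1}} = n^{-S_2^k}$. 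Hence the expected number of vertices coning off a given $k$-simplex is $\sim n^{1-S_1^k}$, and the expected number of \emph{empty shells} (hollow boundary $k$-spheres that are present but whose top $(k+1)$-cell is unfilled) is $\sim n^{k+2-S_2^k}$. These two counts are exactly what the thresholds $S_1^k = 1$ and $S_2^k = k+2$ detect, and they sandwich the non-vanishing window.

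For the sparse vanishing ($S_2^k > k+2 \Rightarrow H^k(\underline{X},\Z)=0$) I would use a first-moment argument. When $S_2^k > k+2$ the expected number of empty shells tends to $0$, so a.a.s. $\underline{X}$ contains none; I would then prove a deterministic topological lemma to the effect that the absence of these configurations forces $H^k(\underline{X},\Z)=0$. The delicate point here is integral coefficients: I must rule out torsion, so the first-moment count has to cover not just hollow simplices but every minimal carrier of a nonzero integral $k$-class, which I would organise by a discrete Morse/collapsing scheme on the few top-dimensional faces, simultaneously showing those are the only obstructions and that nothing torsional survives.

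For the dense vanishing ($S_1^k < 1 \Rightarrow H^k(\underline{X},\Q)=0$) the abundance of cone-offs ($n^{1-S_1^k} \to \infty$ vertices per $k$-simplex) must be upgraded from a local statement to the global statement that every rational $k$-cocycle is a coboundary. I would attempt this either by a Garland-type spectral argument, bounding the relevant Laplacian spectral gap of $\underline{X}$ from below using the concentration of up-degrees, or by a combinatorial cocycle-elimination induction that repeatedly uses a cone-off vertex to shrink the support of a cocycle. This is the step I expect to be the main obstacle: passing from ``each simplex is filled in many ways'' to ``the cohomology globally vanishes'' is precisely where the dependent face-events of the lower model make both the spectral estimates and the union bounds technically heavy.

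Finally, for the non-vanishing window ($S_1^k \geq 1$, $S_2^k < k+2$, all $\alpha_i > 0$) I would run the second-moment method on the number $N$ of empty shells that are present, unfilled, and not coned off from outside. Abundance of shells ($S_2^k < k+2$) gives $\mathbb{E}[N] \to \infty$, while rareness of cone-offs ($S_1^k \geq 1$) guarantees that a positive proportion of these hollow $k$-spheres are not filled into a ball and hence each carries a nonzero class $[\partial \Delta^{k+1}] \neq 0$; positivity of the $\alpha_i$ keeps every lower stratum in the density range needed for such shells to actually occur. Establishing $\mathrm{Var}(N) = o(\mathbb{E}[N]^2)$, so that $N > 0$ a.a.s., and checking that the surviving shells are genuinely non-bounding (not annihilated by overlapping larger chains), is the secondary technical hurdle, to be handled by controlling the correlations between nearby shells.
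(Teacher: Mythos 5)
First, a framing point: the paper does not prove this statement at all --- it is Fowler's theorem, quoted as background and cited to \cite{Fowler19} --- so there is no in-paper proof to compare against. The only traces of the argument inside the paper are the remark after \Cref{expDim2} (interpreting $S_1^k\ge 1$ as ``adding a cone-off vertex decreases expected subcomplex counts'' and $S_2^k<k+2$ as ``the relevant configurations appear at all'') and the remark closing \Cref{cupK}, which records that Fowler's non-vanishing argument produces an empty shell $\partial\Delta$ one of whose top faces is \emph{maximal} in $X$. Judged against that, your identification of the two governing exponents --- cone-off probability $n^{-S_1^k}$ and shell probability $n^{-S_2^k}$ --- is correct and matches the paper's own interpretation, and a Garland-type spectral argument is indeed the standard route in this literature for the dense-regime vanishing $S_1^k<1\Rightarrow H^k(\underline{X},\Q)=0$. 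So the skeleton of your plan is the right one.

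There are, however, two genuine soft spots. The more serious one is the sparse regime: as stated, your reduction is ``a.a.s.\ no copies of $\partial\Delta^{k+1}$, plus a deterministic lemma that absence of these configurations forces $H^k(\underline{X},\Z)=0$.'' No such lemma exists: already for $k=1$, a triangle-free graph can have enormous $H^1$ (any cycle of length $\ge 4$). Your hedge --- that the first-moment count must in fact range over ``every minimal carrier of a nonzero integral $k$-class'' --- is not a refinement of the argument, it \emph{is} the argument, and it is nontrivial precisely because these carriers form infinitely many isomorphism types of unbounded size; one needs either a collapsibility statement (a.a.s.\ every $k$-face has a free face, in the style of Costa--Farber) or a summable enumeration of cores to make the union bound close. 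Your proposal leaves exactly this, the hardest part of the integral statement, unaddressed. The second issue is one of missing the clean mechanism in the non-vanishing window: you treat ``checking that the surviving shells are genuinely non-bounding (not annihilated by overlapping larger chains)'' as a technical hurdle requiring correlation control. It is not, once you use the maximal-face observation the paper attributes to Fowler: if a shell $\partial\Delta^{k+1}$ appears in $X$ with some $k$-face $\tau$ contained in no $(k+1)$-simplex of $X$, then the indicator cochain $\mathbb{1}_\tau$ is a cocycle, and it pairs to $\pm 1$ with the cycle $\partial[\Delta^{k+1}]$, so it cannot be a coboundary; hence $H^k\neq 0$ deterministically, over any coefficient ring. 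With this, your second-moment computation only needs to produce a single shell having a maximal face (here $S_1^k\ge 1$ makes cone-offs of a given face rare, and $\alpha_i>0$ for all $i$ keeps the top face empty with probability bounded away from $0$), and the ``overlapping chains'' worry disappears entirely.
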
 

This result exhibits a phenomenon in the existing literature- most if not all papers in the field discuss cohomology merely as a list of Betti numbers. Cohomology (with ring coefficients), on the other hand, is a graded ring, with the multiplication operation being the cup product.
Moreover, cohomology with coefficients in $\Z/p$ also has a structure of a Steenrod algebra (\cite[chapter 4.L]{AT}) which is the algebra of stable cohomology operations 
%(that is natural transformations between cohomology functors that commute with the equivalence $H^{*}(\Sigma X)=H^{*-1}(X)$) 
over $\Z/p$, generated by the Steenrod squares $Sq^i:H^*\to H^{*+i}$ in the case of $p=2$, or the Steenrod powers $P^i:H^*\to H^{*+2i(p-1)}$ and the Bockstein homomorphism $\beta:H^*\to H^{*+1}$ in the case of an odd $p$. 
These additional structures are useful in distinguishing between spaces with otherwise identical cohomology.
The following are classic examples:
\begin{example}
    Let $X=S^2\vee S^1\vee S^1$ and $ Y=S^1\times S^1$, the 2-\di al torus. Both have identical cohomology groups $H^0=\Z, H^1=\Z^2, H^2=\Z$. However, $H^*(X)$ has a trivial multiplicative structure, whereas in $H^*(Y)$ the  product of the two generators of $H^1$ equals the generator of $H^2$, proving that $X$ and $Y$ are not homotopy equivalent.
\end{example}
\begin{example}
    Let us compare $\C P^2$ and $Y=S^2\vee S^4$. Again, these spaces have identical cohomology over any ring (due to their cell structure), yet the former has a non-trivial cup product while the latter does not.\par
    Furthermore, in $\C P^2$ the generator of $H^2$ squares to the generator of $H^4$. In $\Z/2$ coefficients, this coincides with $Sq^2$, which, unlike the cup product, is stable. 
    Thus, the spaces $\Sigma\C P^2$ and $\Sigma Y=S^3\vee S^5$, which can no longer be distinguished by the cup product structure (as it is trivial for both), may still be distinguished by the non-triviality of $Sq^2$ on the former.
\end{example}
We thus see that this additional algebraic structure is useful, in particular to distinguish a space from a wedge of spheres.
In \cite{FarberMeadNowik22}, the lower model is described as being "homologically approximated by a wedge of spheres", and being a wedge of spheres is a common conjecture/result in the field of simplicial complexes in general (see for instance \cite[\nopp 13.15,17.28,19.10]{Kozlov08}).\par
The goal of this paper is to determine whether these structures are trivial or not for the lower multiparametric model. To the author's knowledge, this is the first time that non-trivial multidimensional phenomena are studied in stochastic topology.

\subsection{Results}
\begin{definition}
    For a topological space $X$ and a ring $R$, the \textbf{cup length} of $X$ is the maximal number of elements
    
    $\{a_i\}_{i}$ where $a_i \in H^{n_i}(X; R)$ and $n_i > 0$ for all $i$, such that $\prod_{i} a_i \neq 0$.
\end{definition}
\begin{theorem}\label{thmcup}  
 Given a list of positive real numbers $\{\alpha_i \}_{i=1} ^\infty$ that satisfy  $S_1 ^k(\{\alpha_i\}_i) \neq 1$ for all $k\in \N$, the family $\underline{X}(n;\{n^{-\alpha_i}\}_i)$ a.a.s has a cup length of 1 over $\Q$. Moreover, the image of the cup product in $H^{2l}$ is trivial over any ring of coefficients, where $l\in \N$ is any number such that $S^l _1(\{\alpha_i\}_i)>1$.
\end{theorem}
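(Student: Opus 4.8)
The plan is to prove cup length $1$ by showing that every product of two positive-degree classes vanishes. Two reductions organize the argument. First, since all $\alpha_i>0$, a one-line computation with Pascal's rule (writing $S_1^{k+1}-S_1^k=\sum_{j\ge 0}\binom{k+1}{j}\alpha_{j+1}$) shows that $S_1^k$ is strictly increasing in $k$; together with the hypothesis $S_1^k\neq 1$ this yields a well-defined threshold $l_0=\min\{k:S_1^k>1\}$, and \Cref{thmFowler} gives $H^k(\underline X;\Q)=0$ for $0<k<l_0$. Hence over $\Q$ any product of two positive-degree classes already lands in degree $\ge 2l_0$, and it suffices to kill products whose factors have degree $\ge l_0$. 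Second, I would record the combinatorial meaning of $S_1^l$: adding a new vertex $w$ and all simplices $\{w\}\cup\tau'$ as $\tau'$ ranges over the nonempty faces of a fixed $l$-simplex $\tau$ (there are $\binom{l+1}{d}$ faces $\tau'$ with $d$ vertices, each contributing cost $\alpha_d$) costs exactly $\sum_{d=1}^{l+1}\binom{l+1}{d}\alpha_d=S_1^l$. Thus $n^{-S_1^l}$ is the probability that a fixed vertex \emph{cones off} $\tau$, and $S_1^l>1$ says a.a.s. no such coning vertex exists; this is the quantitative fact I will exploit.

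Next I would establish a \emph{localization} statement, extracted from the analysis behind \Cref{thmFowler}: a.a.s. $H^k(\underline X;R)$ is generated by duals of \emph{empty simplices}, i.e. boundary spheres $\partial\Delta^{k+1}$ all of whose $k$-faces are present while the $(k+1)$-cell is unfilled, each generator supported on the $k$-faces of a single $(k+1)$-simplex. The cost of such a sphere is $S_2^k$ (its expected count is $n^{k+2-S_2^k}$), while its survival as a non-bounding class is precisely the statement that it is not coned off, governed by $S_1^k>1$. I would phrase this over an arbitrary coefficient ring, since counting empty simplices and their coning vertices is insensitive to the characteristic.

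With generators localized, any product $\alpha\smile\beta$ decomposes into elementary products $\alpha_A\smile\beta_B$, where $A=\partial\Delta^{a+1}$, $B=\partial\Delta^{b+1}$ are empty spheres and $a+b=2l$. The representing cochain $\alpha_A\smile\beta_B$ is supported on $2l$-simplices $\sigma$ whose front $a$-face lies in $A$ and back $b$-face in $B$; since these faces must share the middle vertex, such a $\sigma$ exists only when $A$ and $B$ overlap, and its presence forces a dense local configuration gluing the two spheres. I would then show that $\alpha_A\smile\beta_B$ is nonzero in $H^{2l}$ only if $\underline X$ contains a bounded \emph{witness} subcomplex $K$ — the two spheres, the connecting $2l$-simplices, and a $2l$-cycle detecting the pairing. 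A first-moment estimate bounds the expected number of copies of $K$ by $n^{V(K)-C(K)}$, with $V(K)$ its vertex count and $C(K)=\sum_d(\#\,d\text{-cells of }K)\,\alpha_d$; the crux is the inequality $C(K)>V(K)$, which I expect to reduce, via the cone-cost reading of $S_1^l$ above, exactly to the hypothesis $S_1^l>1$ (the connecting simplices contribute precisely the coning cost that $S_1^l$ measures). Because this count is characteristic-free, the non-appearance of any witness gives triviality of the image of the cup product in $H^{2l}$ over \emph{any} ring.

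Assembling the pieces, over $\Q$ every product of two positive-degree classes lands in degree $\ge 2l_0$ and is annihilated by the witness argument applied to the two relevant sphere dimensions, so the cup length is a.a.s. $1$; the balanced targets $H^{2l}$ are exactly the case in which this subcomplex count, being characteristic-free, upgrades to any coefficient ring, which is the ``moreover''. The main obstacle I anticipate is twofold. First, certifying a \emph{global} nontrivial product by a \emph{bounded} witness: this requires controlling how the localized sphere-cocycles can be glued and ruling out global cancellations, which I would handle by fixing a localized basis and truncating any detecting cycle to a neighborhood of the two spheres. Second, the extremal combinatorial step of identifying the cheapest witness $K$ and proving $C(K)>V(K)\iff S_1^l>1$; here I would strip $K$ down to its unavoidable core (two empty spheres plus a minimal set of glue simplices sharing a middle vertex) and estimate cell by cell, so that the surplus cost of the glue over its new vertices is exactly one copy of the cone cost $S_1^l$ in excess of $1$.
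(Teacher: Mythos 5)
Your opening reduction is fine and mirrors the paper: the monotonicity of $S_1^k$ in $k$ (valid since all $\alpha_i>0$) plus \Cref{thmFowler} recovers the paper's unnumbered dimension-ratio proposition and its corollary, so over $\Q$ the only products at issue are $H^k\times H^k\to H^{2k}$. The two pillars you build on top of this, however, are genuine gaps. The ``localization'' statement --- that a.a.s. $H^k(\underline{X};R)$ is generated by cocycle representatives supported near empty simplices $\partial\Delta^{k+1}$ --- cannot be ``extracted from the analysis behind \Cref{thmFowler}'': as quoted, that theorem only gives vanishing/non-vanishing, and its non-vanishing half merely exhibits \emph{some} classes coming from empty simplices; it says nothing about generation, over $\Q$ and certainly not over an arbitrary ring. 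In fact the paper's own case analysis (\Cref{vertAmount} and the proof following it) shows that strongly connected $2k$-components homotopy equivalent to spheres on $2k+3$ vertexes and to wedges of two spheres do occur in this regime, so any generating set must include classes beyond duals of empty simplices; proving any such structural theorem is a substantial task that neither the paper nor the cited literature supplies. The paper deliberately sidesteps this: \Cref{lemCupSteenConc} needs no knowledge of generators of $H^k$, only that a non-zero product landing in $H^{2k}$ must be detected on some strongly connected $2k$-dimensional component, after which the budget analysis classifies those components.

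Second, even granting localization, your key inequality $C(K)>V(K)$ is false for the witnesses you describe, so the first-moment step fails. Take $l=1$ and the cheapest gluing configuration: two empty triangles sharing one vertex, together with a $2$-simplex whose front edge lies in the first and whose back edge lies in the second. This $K$ has $V(K)=5$ vertexes, $7$ edges and one triangle, so its expected number of copies is $\Theta(n^{5-7\alpha_1-\alpha_2})$. At $\alpha_1=1/2$, $\alpha_2=1/10$ (and all other $\alpha_i$ small and positive) one has $S_1^1=1.1>1$, $S_2^1=1.5<3$ and $S_2^2=3.4<4$, so by \Cref{thmFowler} both $H^1$ and $H^2$ are a.a.s. non-zero and the theorem's hypotheses hold, yet $5-7\alpha_1-\alpha_2=1.4>0$: the witness appears about $n^{1.4}$ times a.a.s. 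The resolution is that this configuration does not certify a non-zero product --- here $K\simeq S^1\vee S^1$, so $H^2(K)=0$ and the product of the two dual classes dies on $K$ --- which shows that what must be ruled out is a configuration additionally carrying a $2l$-cycle that pairs non-trivially with the product. Such a detecting cycle need not be confined to a bounded neighborhood of the two spheres, and your proposed fix (``truncating any detecting cycle to a neighborhood of the two spheres'') is precisely the unproven step; making it rigorous amounts to showing the product is detected on a small strongly connected $2l$-complex and then classifying those, i.e. to \Cref{lemCupSteenConc} together with the vertex bounds of \Cref{finIsoTypes} and \Cref{vertAmount}, which is the paper's route. As written, the proposal does not yield the theorem.
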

Thus, for a full measure of the $\alpha$'s, no non-trivial cup product exists. This is proven by noticing that the requirement that both $H^{k},H^{2k}$ are non-zero simultaneously is very restrictive in this model.
These restrictions limit the possible homotopy types of strongly connected components (see \Cref{defStrongC}), in which the cup and Steenrod operations are concentrated (in a sense made concrete in \Cref{lemCupSteenConc}).\par
Unlike the cup product, Steenrod operations are stable and thus each one requires only a fixed difference of \di s of cohomologies to exist (as opposed to a ratio of 2), so the restriction is less severe. 
\begin{theorem}\label{thmSteenrod}
  Fix a non-zero element $\sigma$ of the Steenrod algebra over $\Z/p$, where $p$ is a prime. Then there exist $D\in \N$ and a set $A\subset \R_{\geq 0} ^D$ with non empty interior such that for $(\alpha_1,...,\alpha_D)\in A$ and $p_i=0,i>D$, $\sigma$ is a.a.s non-zero as a map on $H^*(\underline{X}(n;\{n^{-\alpha_i}\}),\Z/p)$.
  \end{theorem} 
  To prove this, we construct a variation of a simplicial model of the suspension operation (see \Cref{conKoganSus}). This construction modifies a complex into one that is more likely to appear as a subcomplex of $\underline{X}$. 
  If we apply this construction sufficiently many times, the resulting complex appears as a subcomplex of $\underline{X}(n;\{n^{-\alpha_i}\})$ a.a.s. 
  This construction preserves several desirable properties of the complex, including having a non-zero Steenrod operation and strong connectivity. Thus, for some range of $\alpha$’s, $\sigma$ is non-zero on the complex.\par
This indicates that the lower model has a highly non-trivial topological structure for certain parameters.
\subsection{Structure of the paper}

In \Cref{prelim}, we gather definitions and results useful in the analysis of $\underline{X}$ and the topology of simplicial complexes in general. 
The proof of \Cref{thmcup} is separated into \Cref{cup1}, which addresses the case where $H^1(\underline{X})\ne 0$, and \Cref{cupK} for other cases. The separation is due to the proofs being distinct in structure.
\Cref{steenChap} is devoted to the proof of \Cref{thmSteenrod}.
Lastly, \Cref{discus} details problems left open by the present manuscript.

\section{Preliminaries}\label{prelim}
\subsection{Topology \& Probability}
\begin{definition}
    A simplicial complex is called \textbf{pure $d$-dimensional}  if any face in the complex is contained in a $d$-\di al simplex. Note that this implies that $d$ is the maximal dimension.
\end{definition}

\begin{definition}\label{defStrongC}
 For $d$-dimensional simplexes $\sigma,\tau$ in a simplicial complex $X$, define a relation $\sim$ on $d$-faces of $X$, where  $\sigma \sim \tau$ if they share a $(d-1)$-dimensional face. 
 This is obviously reflexive and symmetric. Complete this relation under transitivity and call it $\sim'$. A \textbf{Strong Connectivity Component} is an equivalence class of $d$-dimensional faces of $X$ under this relation. 
A pure-$d$-dimensional simplicial complex is called \textbf{Strongly Connected} if all of its $d$-\di al faces are in the same component.
\end{definition}

\begin{definition}
 Let $a,b$ be two cochains of degrees $k,l$ respectively. Then the cup product $a\cup b$ is a cochain of \di \ $k+l$ defined on a simplex $\Delta^{k+l}=(v_0,...,v_{k+l})$ by $$(a\cup b)(v_0,...,v_{k+l})=a(v_0,...,v_{k})\cdot b(v_{k},...,v_{k+l})$$
 This produces a graded product structure on cohomology (see \cite{AT}). 
The important thing for us is that the value of the cup product on a particular simplex depends only on the values of the multiplied cochains on the faces of that simplex.\end{definition}
\begin{remark}
    In this paper we call a cup product "non-trivial" on a particular space only when there exist two cocycles of positive dimension whose product is non-zero in cohomology, or equivalently when the cup-length is strictly greater than 1. 
    For a non-empty, connected space $X$, $H^0(X;R)=R$, and multiplication by a 0-\di al cocycle is equivalent to multiplication by a scalar. Thus, omitting the condition of the cocycles being of positive dimension would make the question uninteresting. The disconnected case is similar.
\end{remark}
We now, for completeness' sake, give a non-constructive definition of the Steenrod Algebra (for an explicit construction, see for instance \cite[chapter 4.L]{AT}). The reason for this is that almost no detail of any definition/construction will be used.
\begin{definition}
Let $p$ be a prime. A \textit{cohomology operation} over $\mathbb{Z}/p$ is a natural transformation between functors $\Theta: H^n(-; \mathbb{Z}/p) \to H^m(-; \mathbb{Z}/p)$.

A sequence of cohomology operations 
\[ \Theta_n: \tilde{H}^n(-; \mathbb{Z}/p) \to \tilde{H}^{n+k}(-; \mathbb{Z}/p) \]
defined for all integers $n \geq 0$ (where $k \geq 0$ is a fixed integer called the degree of the operation) is said to be \textbf{stable} if it commutes with the suspension isomorphism. Explicitly, if 
 $\sigma: \tilde{H}^n(X; \mathbb{Z}/p) \xrightarrow{\cong} \tilde{H}^{n+1}(\Sigma X; \mathbb{Z}/p)$ is the natural suspension isomorphism, the following diagram commutes for all $n$:
\[
\begin{tikzcd}[column sep=large, row sep=large]
\tilde{H}^n(X; \mathbb{Z}/p) \arrow[r, "\sigma", "\cong"'] \arrow[d, "\Theta_n"'] & \tilde{H}^{n+1}(\Sigma X; \mathbb{Z}/p) \arrow[d, "\Theta_{n+1}"] \\
\tilde{H}^{n+k}(X; \mathbb{Z}/p) \arrow[r, "\sigma", "\cong"'] & \tilde{H}^{n+k+1}(\Sigma X; \mathbb{Z}/p)
\end{tikzcd}
\]
The \textbf{mod-$p$ Steenrod Algebra}, $A_p$, is the algebra of stable cohomology operations under addition and composition.

For $\sigma\in A_p$, we denote by $deg(\sigma)$-the degree of $\sigma$- the amount by which it increases \di . 
\end{definition}
In our proofs, we will use two properties of the mod-$p$ Steenrod Algebra:
\begin{enumerate}
    \item The fact that the operations are stable.
    \item The fact that it is generated, in the $p=2$ case by $\crb{Sq^i}_{i=0}^\infty$, and in the odd $p$ case by $\beta$ and $\crb{P^i}_{i=0}^\infty$.
    For a cocycle $c$ and $\sigma\in \crb{\beta,Sq^i,P^i}$, the value of $\sigma(c)$ on a particular simplex $s$ is a function of the values of $c$ on the faces of $s$.

    The explicit formulae for these functions can be seen in \cite[chapter 4.L]{AT} for the $p=2$ case (which are similar to the cup product formula above), and in \cite{Kaufmann_2021} for other $p$.
\end{enumerate}

We now turn to the key insight that enables the results in this paper:
\begin{lemma}\label{lemCupSteenConc}
Let $X$ be a simplicial complex, $d\in\N$, $p$ a prime and $\sigma\in A_p$.
    \begin{enumerate}
    \item Any one of $H^d(X)$,  $Im (\cup : H^*(X)\times H^*(X)\to H^d(X))$  and $Im(\sigma)\subset H^d$ is non-trivial only if there exists a $d$-\di al strong-connectivity component on which it is non-zero. 
    If $X$ has $d$ as the maximal \di , this is an if and only if for $H^d(X)$.
    \item Let $0\neq p,q,r,s\in H^*(X)$ be elements satisfying either $p\cup q=r$ or $\sigma (p)=s$. Let $X\xrightarrow{i}C=X\cup_{\partial\Delta} \Delta$- a complex with an additional simplex attached along the entire boundary.
    Then $(i^*)^{-1}(r)$ (or $s$) is non-empty if and only if $[r]$ (or $[s]$) can be expanded as a cochain to a cocycle in $C$. In addition, if there exist extensions of $p,q$ to cocycles $p',q'$ in $C$, $i^*(p'\cup q')=r$ (or $i^* (\sigma p')=s$).
    \end{enumerate}
\end{lemma}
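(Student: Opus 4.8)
The plan is to reduce both parts to the \emph{locality} of the coboundary, the cup product, and the Steenrod operations recorded above: the value of each on a simplex $s$ is determined by the values of its inputs on the faces of $s$. For Part 1 I would first isolate the combinatorial backbone. If two $d$-simplices share a $(d-1)$-face they lie in the same strong-connectivity component, so every $(d-1)$-simplex that is a face of at least one $d$-simplex is ``owned'' by a unique component $S_j$, whereas a $(d-1)$-simplex lying in no $d$-simplex is owned by none. Writing $\overline{S_j}$ for the subcomplex spanned by $S_j$ together with its faces, locality then yields the decompositions $C^d(X)=\bigoplus_j C^d(S_j)$ and, crucially, $\mathrm{Im}(\delta^{d-1})=\bigoplus_j \mathrm{Im}(\delta^{d-1}_{\overline{S_j}})$: the restriction of $\delta b$ to the $d$-simplices of $S_j$ depends only on the values of $b$ on the $(d-1)$-faces owned by $S_j$, and any local coboundary extends by zero.

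Granting this, the ``only if'' half of Part 1 for $H^d$ is immediate: given $0\neq[c]\in H^d(X)$, if the restriction $c|_{\overline{S_j}}$ were a coboundary $\delta b_j$ for every $j$, I would assemble a single $b\in C^{d-1}(X)$ equal to $b_j$ on the $(d-1)$-simplices owned by $S_j$ (consistent, since ownership is unique) and to $0$ elsewhere; locality forces $\delta b=c$ on every $d$-simplex, contradicting $[c]\neq 0$. Hence $i_j^*[c]\neq 0$ for some $j$. When $d$ is the maximal dimension every $d$-cochain is a cocycle, so the coboundary decomposition upgrades this to the isomorphism $H^d(X)\cong\bigoplus_j H^d(\overline{S_j})$ and gives the stated ``if and only if''. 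The cup and Steenrod assertions then follow from naturality: each $i_j^*$ is a ring homomorphism commuting with Steenrod operations, so $i_j^*(p\vee q)=i_j^*p\vee i_j^*q$ and $i_j^*(\sigma p)=\sigma(i_j^*p)$; applying the $H^d$ result to $r=p\vee q$ (resp. $s=\sigma p$) and picking a component where its restriction survives exhibits the operation as non-trivial there.

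For Part 2 I would analyze the cochain complex of $C=X\cup_{\partial\Delta}\Delta$ by hand. Gluing $\Delta$ along an already-present boundary adds a single generator $\Delta^*$ in degree $m=\dim\Delta$, and since $\Delta$ is a face of no simplex of $C$ the differential changes only by the one new entry in degree $m-1$, the alternating sum of $b$ over the facets of $\Delta$. Consequently, extending a cocycle representative $\tilde r$ of $r$ to a cocycle on $C$ is obstructed solely by the scalar $\delta\tilde r(\Delta)$ when $\deg r=m-1$, and is automatic otherwise; since $\delta\delta=0$, this scalar is unchanged under $\tilde r\mapsto\tilde r+\delta b$ and hence depends only on the class $r$. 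This representative-independence is precisely what turns ``$r$ extends to a cocycle in $C$'' into $(i^*)^{-1}(r)\neq\emptyset$: an extension is a preimage, and conversely any preimage restricts to a cocycle representative of $r$, which (being a representative) extends as well. The same argument applies verbatim to $s$. The closing clause is then functoriality: if $p',q'$ are cocycle extensions, locality gives $(p'\vee q')|_X=p'|_X\vee q'|_X$, a representative of $r$, so $i^*(p'\vee q')=r$, and likewise $i^*(\sigma p')=\sigma(i^*p')=s$ by naturality.

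I expect the main obstacle to be the bookkeeping in Part 1: establishing $\mathrm{Im}(\delta^{d-1})=\bigoplus_j \mathrm{Im}(\delta^{d-1}_{\overline{S_j}})$ cleanly and tracking the ownership of each $(d-1)$-simplex, including those maximal ones owned by no component. One must also take care that the ``only if'' direction needs only that $c$ fails to be a coboundary and so holds for arbitrary $d$, whereas the converse genuinely requires $d$ to be maximal, so that no higher simplices can turn a class supported on a single component into a coboundary or destroy its cocycle property.
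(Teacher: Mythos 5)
Your proposal is correct and follows essentially the same route as the paper: your ownership/assembly argument for $(d-1)$-faces is exactly the content behind the paper's use of \Cref{ez} to show the restriction $f^*$ to the disjoint union of strong-connectivity components creates no new coboundaries (hence is injective on $H^d$, and an isomorphism when $d$ is maximal), after which both proofs conclude by naturality of the cup product and Steenrod operations. Your Part 2 likewise matches the paper's (which dismisses the extension claim as obvious and invokes naturality), merely adding the nice observation that the extension obstruction $\delta\tilde r(\Delta)$ is representative-independent.
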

\begin{proof}
    \begin{enumerate}
    \item Denote the $d$-\di al strong connectivity components of $X$ by $\{X_i\}_i$. Define $f:\bigsqcup_i X_i\to X$ to be the map which restricts to the inclusions $f_i:X_i\to X$. 

    Let $0\ne [h]\in H^d(X)$ be a cocycle. For the sake of contradiction, assume $\forall i\ f_i^*(h):=h_i=0$, i.e that it is a coboundary. 
    Therefore, there exists a cochain $c_i$ s.t $\delta(c_i)=h_i$. 

    $f$ is injective on the $(d-1)$-skeletons, and bijective on $d$-faces, so $\bigsqcup_i c_i$ may be extended to a cochain $c$ on $X$ (the $(d-1)$-faces outside of the image are not part of any $d$-face), where $\delta(c)=h$. This contradicts our assumption that $[h]\ne 0$, proving there exists an $i$ s.t $[f^*_i(h)]\ne 0$.

    If $h=a\cup b$ (or $h=\sigma(g)$ for some $\sigma\in A_p$), we have, for every $i$, that $f_i^*(h)=f_i^*(a)\cup f_i^*(b)$ (or $f_i^*(h)=\sigma f_i^*(g)$) by naturality of the cup product and Steenrod algebra, granting the results for these as well.

    Now, if $d$ is the maximal \di\ of $X$, any non-trivial $d$-cocycle on $X_i$ can be extended to a non-trivial cocycle on $X$, finishing the proof.

    \item The claim about $r,s$ is obvious, and the second claim follows from naturality of the cup product and Steenrod operations.
    \end{enumerate}
\end{proof}
\begin{corollary}
    For any simplicial complex $X$, the non-triviality of any of $H^d(X)$, $\cup:H^*(X)\times H^*(X)\to H^d(X)$ or $\sigma:H^{d-\deg(\sigma)}(X)\to H^d(X)$ implies its non-triviality for $sk^d (X)$. 
    Furthermore, to deduce the non-triviality of $H^d$, $\cup$ or $\sigma$ on the entire complex from that on a strong connectivity component, we need only check if the relevant cocycles can be extended. In particular, only the addition of simplexes of \di s $\dim(p)+1, \dim(q)+1$ or $\dim(r)+1$ (or $\dim(s)+1$) might affect these extensions.
\end{corollary}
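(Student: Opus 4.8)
The plan is to deduce the corollary from \Cref{ez} and \Cref{lemCupSteenConc} together with the naturality of $\vee$ and $\sigma$, treating its two assertions separately: passage to the $d$-skeleton, and passage between a strong-connectivity component and the ambient complex.

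For the first assertion I would work with the skeleton inclusion $j:sk^d(X)\hookrightarrow X$. In every degree $k<d$ the map $j^*$ is an isomorphism, since $H^k$ only depends on the $(k+1)$-skeleton and $sk^{k+1}(sk^d(X))=sk^{k+1}(X)$. In degree $d$ the groups of $(d-1)$- and $d$-cochains of $sk^d(X)$ and $X$ coincide, so the coboundaries $B^d$ agree while $Z^d(X)\subseteq C^d=Z^d(sk^d(X))$; by \Cref{ez} the map $j^*$ carries cocycles to cocycles and coboundaries to coboundaries, and the equality of $B^d$ forces $j^*$ to be injective on $H^d$. Naturality then finishes this assertion: if $a\vee b=c\neq 0$ or $\sigma a=c\neq 0$ in $H^d(X)$, then $j^*c=j^*a\vee j^*b$ (resp.\ $\sigma(j^*a)$) is nonzero in $H^d(sk^d(X))$, and likewise $H^d(X)\neq 0$ gives $H^d(sk^d(X))\neq 0$.

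For the second assertion, fix a strong-connectivity component $X_0\subseteq X$ on which the structure is nontrivial, say $p\vee q=r\neq 0$ in $H^d(X_0)$ (the Steenrod case $\sigma p=s$ is identical). I would build $X$ from $X_0$ by adjoining the remaining simplices one at a time in order of nondecreasing dimension. Since the proper faces of any simplex have strictly smaller dimension, by the time a simplex is adjoined all of its faces are already present, so each attachment is along the \emph{entire} boundary and \Cref{lemCupSteenConc}(2) applies at every stage. The key observation is that the only obstruction to carrying $p$ and $q$ along as cocycles is the cocycle condition of \Cref{lemCupSteenConc}(2): a cocycle of degree $m$ can cease to be a cocycle after an attachment only when the attached simplex has dimension $m+1$, because being a cocycle is precisely the condition imposed by $\delta:C^m\to C^{m+1}$. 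Hence only simplices of dimension $\dim p+1$, $\dim q+1$ (and $\dim r+1$, if one tracks $r$ directly; resp.\ $\dim p+1,\ \dim s+1$) can affect extendability, which is exactly the stated count.

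Finally I would note that once $p,q$ do extend to cocycles $p',q'$ on $X$, nontriviality on the whole complex is automatic and requires no further check: by naturality $i^*(p'\vee q')=p\vee q=r$ under the inclusion $i:X_0\hookrightarrow X$, and a class whose restriction to $X_0$ is nonzero must itself be nonzero, so $p'\vee q'\neq 0$ in $H^d(X)$. This is why the cocycle condition is the \emph{only} thing that must be verified. I expect the main technical point to be exactly this bookkeeping—confirming that attachments of the other dimensions leave the chosen cocycle representatives intact, so that the extensions can be made coherently through the whole filtration—together with the remark that attaching along the full boundary is what licenses the repeated use of \Cref{lemCupSteenConc}(2). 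The coboundary side needs no separate treatment, since nontriviality is detected by restriction back to $X_0$ rather than by controlling $B^d(X)$.
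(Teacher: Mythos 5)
Your proposal is correct and takes essentially the approach the paper intends: the corollary is stated there without its own proof, as an immediate consequence of \Cref{ez} and \Cref{lemCupSteenConc}, and your write-up supplies exactly those details (injectivity of $j^*$ on $H^d$ from the shared $(d-1)$- and $d$-cochains, iterated full-boundary attachments in nondecreasing dimension so that part 2 of the lemma applies at each step, and naturality of $\vee$ and $\sigma$ to conclude). No gaps.
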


\begin{definition}\label{aas}    
 Let $D(n;\vec{p}(n))$ be a family of distributions depending on a natural number $n$ and perhaps other parameters. $D(n;\vec{p}(n))$ satisfies a property $q$ \textbf{a.a.s} (asymptotically almost surely) if $\underset{n\to \infty}{\lim} P(q)=1$.
 \end{definition}

\begin{example}
    
Let $G(n,p)$ be a distribution on graphs with $n$ vertices, where each edge appears independently with probability $p$.
A theorem by Erdős and Rényi states that $\frac{\ln(n)}{n}$ is the threshold function for connectivity, meaning that for $p(n)=o(\frac{\ln(n)}{n})$ the graph is a.a.s disconnected, and for $p(n)=\omega(\frac{\ln(n)}{n})$ it is a.a.s connected. \end{example} 

\subsection{General Results in the Lower Multiparametric Model}

\begin{proposition}
    Let $X\in \underline{X}(n;n^{-\alpha_1},...)$ be a random simplicial complex, where $\crb{\alpha_i}$ are non-negative real numbers. Then a.a.s, for any $0\neq a,b\in H^*(X;\Q)$ cocycles of pure dimension, $\frac{1}{2}\leq \frac{\dim(a)}{\dim(b)}\leq 2$.  
    
\end{proposition}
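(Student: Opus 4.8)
The plan is to reduce the probabilistic statement to a deterministic combinatorial inequality on the exponents $\alpha_i$ and then quote \Cref{thmFowler}. Since both $a,b$ are homogeneous and the claim is symmetric in them, write $k:=\dim(a)$, $m:=\dim(b)$ and assume $1\le k\le m$ (the degenerate degree $0$ being excluded by the positive-dimensionality convention of the earlier remark); then $\tfrac12\le k/m\le 2$ is equivalent to $m\le 2k$. First I would note that only finitely many degrees matter: a first-moment computation gives that the expected number of $d$-simplices satisfies $\log_n E(\Delta^d)=(d+1)-\sum_{j=1}^{d}\binom{d+1}{j+1}\alpha_j+o(1)$, whose exponent is at most $(d+1)\bigl(1-\tfrac d2\alpha_1\bigr)+o(1)\to-\infty$ because $\alpha_1>0$. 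Hence a.a.s. $X$ carries no simplex of dimension above a fixed $d_{\max}$, so $H^d(X;\Q)=0$ for $d>d_{\max}$, and it suffices to control the finitely many pairs $1\le k\le m\le d_{\max}$; a finite intersection of a.a.s. events is again a.a.s.

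Next I would feed each nonvanishing degree into \Cref{thmFowler}. For a degree $d$ with $H^d(X;\Q)\neq 0$, the contrapositive of the first bullet gives $S_1^d(\{\alpha_i\})\ge 1$, while the contrapositive of the second bullet, together with $H^d(X;\Q)=H^d(X;\Z)\otimes\Q$, gives $S_2^d(\{\alpha_i\})\le d+2$. Applying the first of these in degree $k$ and the second in degree $m$ yields, a.a.s., $S_1^k\ge 1$ and $S_2^m\le m+2$. The problem is now entirely deterministic: I must show these are incompatible with $m\ge 2k+1$.

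The heart of the argument is the termwise comparison $\binom{m+2}{i+1}\ge(m+2)\binom{k+1}{i}$ for $1\le i\le k+1$ whenever $m\ge 2k+1$. Rewriting $\binom{m+2}{i+1}=\tfrac{m+2}{i+1}\binom{m+1}{i}$ reduces this to $\binom{m+1}{i}\ge(i+1)\binom{k+1}{i}$, and since $m+1\ge 2(k+1)$ every factor of $\binom{m+1}{i}/\binom{k+1}{i}=\prod_{j=0}^{i-1}\tfrac{m+1-j}{k+1-j}$ is at least $2$, so the ratio is at least $2^i\ge i+1$. Weighting the comparison by $\alpha_i$ and summing over $1\le i\le k+1$, then discarding the strictly positive tail $\sum_{i=k+2}^{m}\binom{m+2}{i+1}\alpha_i$ (nonempty since $m\ge 2k+1\ge k+2$ and every $\alpha_i>0$), I obtain $S_2^m>(m+2)S_1^k\ge m+2$, contradicting $S_2^m\le m+2$. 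Therefore $m\le 2k$, which is the assertion.

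The main obstacle is the combinatorial inequality, and in particular its tightness: at $m=2k+1$ and $i=1$ the comparison is an equality, so the strict bound $S_2^m>m+2$ has to be extracted from the positivity of the tail terms. This is precisely where the hypothesis $\alpha_i>0$ for all $i$ is essential, and it also shows the constant $2$ is sharp. The only remaining care is in the uniform dimension bound $d_{\max}$ (so the a.a.s. conclusion survives the intersection over degrees) and in the $\Z$-versus-$\Q$ bookkeeping when invoking the second bullet of \Cref{thmFowler}.
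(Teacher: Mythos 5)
Your proposal is correct and takes essentially the same route as the paper: both reduce the statement via \Cref{thmFowler} to the deterministic incompatibility of $S_1^k\ge 1$ with $S_2^m\le m+2$ for $m\ge 2k+1$, and both prove it by the identical termwise binomial comparison $\prod_{j=1}^{i}\tfrac{m+2-j}{k+2-j}\ge 2^i\ge i+1$ (the paper states it for $m=2k_0+1$ with $k_0$ the minimal degree where $S_1\ge 1$). Your extra bookkeeping --- the first-moment bound giving a uniform $d_{\max}$ so only finitely many a.a.s.\ events are intersected, the $\Z$-versus-$\Q$ reduction, and extracting strictness from the positive tail $\sum_{i=k+2}^{m}\binom{m+2}{i+1}\alpha_i$ rather than from a strict $S_2$ bound --- makes explicit some points the paper glosses over, but the core argument is the same.
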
 
\begin{proof}
    From \Cref{thmFowler} we know that any $k$ for which rational cohomology is not a.a.s zero satisfies
    \begin{equation}\label{eq:S1S2ineq}
            \sum_{i=1} ^{k+1}\binom{k+1}{i}\alpha_i\geq 1\ \land\ \sum_{i=1} ^{k}\binom{k+2}{i+1}\alpha_i< k+2.
    \end{equation}
    Let $k_0$ be the minimal $k$ where the left-hand condition is satisfied. By substituting $k=2k_0+1$ in the left-hand expression we get $$\sum_{i=1} ^{2k_0+1}\binom{2k_0+3}{i+1}\alpha_i<2k_0+3\Longleftrightarrow \sum_{i=1} ^{2k_0+1}\frac{\prod_{j=1}^{i}(2k_0+3-j)}{(i+1)!}\alpha_i<1.$$
    This contradicts the left-hand inequality in \Cref{eq:S1S2ineq}, since it would mean in particular 
    $$1\leq \sum_{i=1} ^{k_0+1}\binom{k_0+1}{i}\alpha_i\leq\sum_{i=1} ^{k_0+1}\frac{\prod_{j=1}^{i}(2k_0+3-j)}{(i+1)!}\alpha_i<1$$
    where the middle inequality follows from $\binom{k_0+1}{i}=\frac{\prod_{j=1}^{i}(k_0-j+2)}{i!}\le\frac{\prod_{j=1}^{i}(2k_0-j+3)}{(i+1)!}$ which in turn is a consequence of $i+1\le 2^i\le \prod_{j=1}^{i}\frac{2k_0+3-j}{k_0+2-j}$. \par
    This entails that in the lower-multiparametric model the ratio of dimensions in which the rational cohomology is not a.a.s zero is at most 2. 
\end{proof}

\begin{corollary}\label{cor:cupdim}
    For $p_i=n^{-\alpha_i}$, if the probability that $\underline{X}(n;p_1,...)$ has a non-trivial cup product over $\Q$ does not tend to $0$, then the cup product is non-trivial between $(H^k(X,\Q))^2$ and $H^{2k}(X,\Q)$.
\end{corollary}

\begin{definition}\label{expansionOp}
    For a  strongly connected $d$-\di al simplicial complex $X$, an \textbf{expansion operation} is the addition of a simplex of \di \ $d$, while keeping the component strongly connected.
    In order for this to be the case, the added simplex must share a $d-1$ \di al face with an existing face, in addition to possibly other faces.\par
    Two expansion operations $A\mapsto A\cup_Q \Delta^d, B\mapsto B\cup_W \Delta^d$ are of the same type if $Q\cong W$. 
    In particular, an expansion operation $A\mapsto A\cup_Q \Delta^d$ is called a \textbf{vertex adding operation} if $Q=\Delta^{d-1}$.
    
\end{definition} 

\begin{lemma}\label{vertBeforeAles}
For $C$ strongly connected $d$-\di al, there exists a sequence $\Delta^{d}=C_0\subset C_1\subset...\subset C_m=C$ of expansion operations, where all $C_i$ are strongly connected, and, denoting $C_{i+1}=C_i\cup_{Q_i} \Delta^{d}$, $Q_i$ contains a $d-1$-face and $C_i$ has $i+1$ maximal faces for all $i$. Such a sequence exists for any choice of $C_0$ and is called a \textbf{growing process} or \textbf{growth process} of $C$.
\end{lemma}
\begin{proof}
 Define a graph $G_C=(V_C,E_C)$, where $V_C$ is the set of maximal faces of $C$, and $(\mu,M)$ is an edge if and only if $\dim(\mu\cap M)\ge d-1$. 
    The strong connectivity of $C$ is equivalent to $G_C$ being connected, and so $G_C$ contains a spanning tree.
    Any tree has a filtration $\{v\}=T_0\subset T_1\subset ...\subset T_m=T$, where we add 1 vertex and 1 edge at a time, keeping the tree connected, and such a filtration corresponds to the desired $\Delta^d=C_0\subset C_1\subset...\subset C_m=C$. Furthermore, $C_0$ may be chosen to be any face as any vertex may be chosen to be the root of the tree.
\end{proof}

\begin{definition}\label{defPartial}
    For a simplicial complex $A$ with $a$ vertices, and a model of random simplicial complexes $X$, we denote 
    $$E(A\subset X)=\sum_{\underset{|S|=a}{S\subset V(X)}}P(A\subset X|_S),$$
    where $P(A\subset X|_S)$ is the probability of it being possible to map $A$ injectively to the induced subcomplex on $S$.
    In other words, it is the expected number of appearances of $A$ as a subcomplex of $X$. We sometimes simply write "$E(A)$" if $X$ is clear from context.
\end{definition}
In order to use the first and second moment methods, we wish to have terminology tying a growing process of a component to its expectation.
\begin{definition}\label{defBudgetCost}
     The \textbf{budget} (w.r.t \di\ $d$) is $\lim_{n\to \infty}\log_n(E(\Delta^d\subset X))$.\par
    For an expansion operation $A\mapsto A'$ the \textbf{cost} is $\lim_{n\to \infty}\log_n(\frac{E(A\subset X)}{E(A'\subset X)})$. 
    
\end{definition}
\begin{proposition}\label{propBudCostLower}
    The terms \textbf{budget} and \textbf{cost} are well defined for $\underline{X}(n;p_1,...)$ where $p_i=n^{-\alpha_i}$.
\end{proposition}
\begin{proof} 
    We prove a more general result. Assume the simplicial complex $A$ has $a_0$ vertices, $a_1$ edges and so on. We prove that $\lim_{n\to \infty}\log_n(E(A\subset \underline{X}))=a_0-\sum_{i=1} ^{\dim(A)}a_i\alpha_i$,  .\par
    For a particular choice of an injective map $f$ from the vertices of $A$ to $a_0$ vertices of $\underline{X}$, the probability of this map to extend to a map of complexes may be written as follows: 
    Define $B_k$ to be the event that $f$ sends all $k$-faces in $A$ to $k$-faces in $\underline{X}$. Then we are interested in $P\prs{\bigcap_{i=1} ^{\dim(A)}B_i}$.
    $$P\prs{\bigcap_{i=1} ^{\dim(A)}B_i}=P(B_1)P(B_2|B_1)P(B_3|B_2\cap B_1)...=\prod_{i=1} ^{\dim(A)} P\prs{B_i\middle\lvert\bigcap_{j=1} ^{i-1}B_j}$$
    by definition of conditional probability. But in the lower model $P(B_i|\bigcap_{j=1} ^{i-1}B_j)=p_i ^{a_i}=n^{-a_i\alpha_i}$. There are $\binom{n}{a_0}$ subsets of $[n]$ of size $a_0$, and for each such subset there are $a_0!$ possible maps $f$ (if $A$ has symmetries, these maps might induce the same map on complexes, but this turns out not to matter). 
    Therefore, 
    $$\binom{n}{a_0}n^{-\sum_{i=1} ^{\dim(A)}a_i\alpha_i}\le E(A\subset \underline{X})\le a_0!\binom{n}{a_0}n^{-\sum_{i=1} ^{\dim(A)}a_i\alpha_i}$$ 
    and so $\log_n(E(A\subset X))\xrightarrow{n\to \infty} a_0-\sum_{i=1} ^{\dim(A)}a_i\alpha_i$.\par
    This generalizes the calculation from \cite{Fowler19} for the simplex, giving $E(\Delta^d\subset X)=\binom{n}{d+1}n^{-\sum_{i=1} ^{d+1}\binom{d+1}{i+1}\alpha_i}$ and a budget $\log_n(E(\Delta^d\subset X))\xrightarrow{n\to \infty} d+1-\sum_{i=1} ^{d+  1}\binom{d+1}{i+1}\alpha_i$.\par
    As for expansion operations, since $\lim_{n\to \infty}\log_n(E(A\subset X))$ depends only on the number of faces of each \di, an expansion operation $A\mapsto A\cup_Q \Delta^d$ increases the number of $i$-faces by $\binom{d+1}{i+1}-q_i$, where $q_i$ is the number of $i$-faces in $Q$. So the cost is well defined, and only depends on the type of expansion operation.
\end{proof}
\begin{remark}
    A corresponding result is also true for other $p_i$, but is better stated without logarithms and will not be needed here.
    
    Note that there are in general multiple choices of a growth process for a strongly connected complex. 
\end{remark}
\begin{example}\label{expDim2}
    
 The following are the expansion operations for the pure 2-dimensional case:
\begin{center}
\begin{tabular}{| c | c | c |} 
 \hline
 Operation & Cost & Homotopical effect\\
 \hline
\begin{tikzpicture}[x=0.75pt,y=0.75pt,yscale=-1,xscale=1]
%uncomment if require: \path (0,300); %set diagram left start at 0, and has height of 300

%Shape: Triangle [id:dp6994014486877365] 
\draw  [fill={rgb, 255:red, 155; green, 155; blue, 155 }  ,fill opacity=1 ] (16.25,54.25) -- (56.25,34.25) -- (56.25,74.25) -- cycle ;
%Shape: Triangle [id:dp3867613909484746] 
\draw  [color={rgb, 255:red, 208; green, 2; blue, 27 }  ,draw opacity=1 ][pattern=redstripes,pattern size=6pt,pattern thickness=0.75pt,pattern radius=0pt, pattern color={rgb, 255:red, 208; green, 2; blue, 27}] (96.25,54.25) -- (56.25,74.25) -- (56.25,34.25) -- cycle ;
%Shape: Circle [id:dp16977051056865888] 
\draw  [fill={rgb, 255:red, 208; green, 2; blue, 27 }  ,fill opacity=1 ] (92,54.25) .. controls (92,51.9) and (93.9,50) .. (96.25,50) .. controls (98.6,50) and (100.5,51.9) .. (100.5,54.25) .. controls (100.5,56.6) and (98.6,58.5) .. (96.25,58.5) .. controls (93.9,58.5) and (92,56.6) .. (92,54.25) -- cycle ;
\end{tikzpicture} &   $2\alpha_1+\alpha_2-1 >0$  & 

    None \\
 \hline 
\begin{tikzpicture}[x=0.75pt,y=0.75pt,yscale=-1,xscale=1]
%uncomment if require: \path (0,300); %set diagram left start at 0, and has height of 300

%Shape: Triangle [id:dp6994014486877365] 
\draw  [fill={rgb, 255:red, 155; green, 155; blue, 155 }  ,fill opacity=1 ] (16.25,54.25) -- (56.25,34.25) -- (56.25,74.25) -- cycle ;
%Shape: Triangle [id:dp3867613909484746] 
\draw  [color={rgb, 255:red, 208; green, 2; blue, 27 }  ,draw opacity=1 ][pattern=redstripes,pattern size=6pt,pattern thickness=0.75pt,pattern radius=0pt, pattern color={rgb, 255:red, 208; green, 2; blue, 27}] (96.25,54.25) -- (56.25,74.25) -- (56.25,34.25) -- cycle ;
%Shape: Circle [id:dp16977051056865888] 
\draw  [fill={rgb, 255:red, 0; green, 0; blue, 0 }  ,fill opacity=1 ] (92,54.25) .. controls (92,51.9) and (93.9,50) .. (96.25,50) .. controls (98.6,50) and (100.5,51.9) .. (100.5,54.25) .. controls (100.5,56.6) and (98.6,58.5) .. (96.25,58.5) .. controls (93.9,58.5) and (92,56.6) .. (92,54.25) -- cycle ;
\end{tikzpicture} & $2\alpha_1+\alpha_2>1$ & Adds generator to $H^1$\\
 \hline 
\vspace*{3 pt}
\begin{tikzpicture}[x=0.75pt,y=0.75pt,yscale=-1,xscale=1,rotate=-90]
%uncomment if require: \path (0,300); %set diagram left start at 0, and has height of 300

%Shape: Triangle [id:dp3867613909484746] 
\draw  [color={rgb, 255:red, 208; green, 2; blue, 27 }  ,draw opacity=1 ][pattern=redstripes,pattern size=6pt,pattern thickness=0.75pt,pattern radius=0pt, pattern color={rgb, 255:red, 208; green, 2; blue, 27}] (28.93,81.57) -- (63.57,61.57) -- (63.57,101.57) -- cycle ;
%Shape: Triangle [id:dp6994014486877365] 
\draw  [fill={rgb, 255:red, 155; green, 155; blue, 155 }  ,fill opacity=1 ] (26.25,36.93) -- (63.57,61.57) -- (28.93,81.57) -- cycle ;
%Shape: Triangle [id:dp4036264074379974] 
\draw  [fill={rgb, 255:red, 155; green, 155; blue, 155 }  ,fill opacity=1 ] (26.25,126.21) -- (28.93,81.57) -- (63.57,101.57) -- cycle ;
\end{tikzpicture} & 
    $\alpha_1+\alpha_2>\frac{1}{2}$
& 
    None
\\
 \hline 
 \begin{tikzpicture}[x=0.75pt,y=0.75pt,yscale=-0.75,xscale=0.75]
%uncomment if require: \path (0,300); %set diagram left start at 0, and has height of 300

%Shape: Triangle [id:dp3867613909484746] 
\draw  [color={rgb, 255:red, 208; green, 2; blue, 27 }  ,draw opacity=1 ][pattern=redstripes,pattern size=6pt,pattern thickness=0.75pt,pattern radius=0pt, pattern color={rgb, 255:red, 208; green, 2; blue, 27}] (28.93,81.57) -- (63.57,61.57) -- (63.57,101.57) -- cycle ;
%Shape: Triangle [id:dp6994014486877365] 
\draw  [fill={rgb, 255:red, 155; green, 155; blue, 155 }  ,fill opacity=1 ] (26.25,36.93) -- (63.57,61.57) -- (28.93,81.57) -- cycle ;
%Shape: Triangle [id:dp4036264074379974] 
\draw  [fill={rgb, 255:red, 155; green, 155; blue, 155 }  ,fill opacity=1 ] (26.25,126.21) -- (28.93,81.57) -- (63.57,101.57) -- cycle ;
%Shape: Triangle [id:dp23405293669819338] 
\draw  [fill={rgb, 255:red, 155; green, 155; blue, 155 }  ,fill opacity=1 ] (103.57,81.57) -- (63.57,101.57) -- (63.57,61.57) -- cycle ;

\end{tikzpicture} & \centering $\alpha_2$ &     Reduces $H^1$ or increases $H^2$ \\
 \hline
\end{tabular}

\end{center}\par
\noindent where the costs are w.r.t the lower model and the inequalities follow from $S^1 _1(\{\alpha_i\}_i)>1$. The black and gray simplexes denote the complex before the expansion operation,  while the red patterned simplexes denote new simplexes. \par
For instance, consider the following growth process for the tetrahedron:
$$\begin{tikzpicture}

\draw[fill=gray,fill opacity=0.4]  (0,0) 
  -- (2,0) 
  -- (2.2,1) 
  -- cycle;

\draw (0,0) node[left]{$A$}
(2,0) node[right]{$B$}
  (2.2,1) node[right]{$C$};
\end{tikzpicture}
\begin{tikzpicture}

\draw[fill=gray,fill opacity=0.4]  (0,0) 
  -- (2,0) 
  -- (2.2,1) 
  -- cycle;
\draw[pattern=redstripes,pattern size=6pt,pattern thickness=0.75pt,pattern radius=0pt, pattern color={rgb, 255:red, 208; green, 2; blue, 27}]  (0,0) 
  -- (1,1.6) 
  -- (2.2,1) 
  -- cycle;
\draw (0,0) node[left]{$A$}
(1,1.6) node[above]{$D$}
(2,0) node[right]{$B$}
  (2.2,1) node[right]{$C$};
\end{tikzpicture}
\begin{tikzpicture}

\draw[fill=gray,fill opacity=0.3]  (0,0) 
  -- (2,0) 
  -- (2.2,1) 
  -- cycle;
\draw[fill=gray,fill opacity=0.3]  (0,0) 
  -- (1,1.6) 
  -- (2.2,1) 
  -- cycle;
\draw (0,0) node[left]{$A$}
(1,1.6) node[above]{$D$}
(2,0) node[right]{$B$}
  (2.2,1) node[right]{$C$};
\draw[pattern=redstripes,pattern size=6pt,pattern thickness=0.75pt,pattern radius=0pt, pattern color={rgb, 255:red, 208; green, 2; blue, 27}]
(0,0) 
  -- (1,1.6) 
  -- (2,0)
  -- cycle;
\end{tikzpicture}
\begin{tikzpicture}
\draw[fill=gray,fill opacity=0.2]  (0,0) 
  -- (1,1.6) 
  -- (2,0)
  -- cycle;

\draw[fill=gray,fill opacity=0.3]  (0,0) 
  -- (2,0) 
  -- (2.2,1) 
  -- cycle;
\draw[fill=gray,fill opacity=0.3]  (0,0) 
  -- (1,1.6) 
  -- (2.2,1) 
  -- cycle;
\draw (0,0) node[left]{$A$}
(1,1.6) node[above]{$D$}
(2,0) node[right]{$B$}
  (2.2,1) node[right]{$C$};
\draw[pattern=redstripes,pattern size=6pt,pattern thickness=0.75pt,pattern radius=0pt, pattern color={rgb, 255:red, 208; green, 2; blue, 27}] (1,1.6) 
  -- (2,0) 
  -- (2.2,1) 
  -- cycle; 
\end{tikzpicture}
$$
So we begin from the triangle $ABC$ and a budget of $3-3\alpha_1-\alpha_2$, and proceed to add vertex $D$ via the first operation, then add triangle $ABD$ via the third operation (costing $\alpha_1+\alpha_2$), and finally "close" the tetrahedron by adding triangle $BCD$ using the fourth operation (costing $\alpha_2$). 
All in all, $4-6\alpha_1-4\alpha_2$ of the budget remains, which, assuming $S^1 _1(\{\alpha_i\}_i)\geq 1$, is less than or equal to $1-\alpha_2$. That is, the expected number of subcomplexes isomorphic to the tetrahedron is $O(n^{1-\alpha_2})$.
 \end{example}
 \begin{remark}
    It is at this point that we may give an interpretation to the expressions $S^k _1(\{\alpha_i\}_i), S^k _2(\{\alpha_i\}_i)$ in the lower model. $S^k _1(\{\alpha_i\}_i)
    \geq 1$ is the requirement that the vertex adding expansion operation of \di\ $k+1$ decreases the expected number of appearances of the subcomplex.
    This means that $k+1$ \di al components are more common the smaller they are. The contribution of this condition to $k$ cohomology is that $k$-\di al cycles do not tend to get "filled" by $k+1$ faces.
    
    Meanwhile, $S^k _2(\{\alpha_i\}_i)<k+2$ is the requirement that the complex $\partial(\Delta^{k+1})$ appears as a subcomplex.
    This is the smallest $k$-cycle, as well as a prerequisite for $k+1$ faces to appear at all.
\end{remark}

\begin{proposition}\label{prop:edgeCost}
    Assume $S_1 ^k(\{\alpha_i\}_i)\geq 1$.  For a strongly connected $m$-\di al complex $C$, denote by $C'$ the result of some expansion operation adding $w$ edges and no vertices to $C$, where $m>k$. Then $E(C)= \Omega(n^{\frac{w}{k+1}} E(C'))$ .
\end{proposition}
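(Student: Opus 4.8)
The plan is to pass to logarithms of expectations and reduce the whole statement to a single inequality about binomial coefficients. Recall that in the lower model a fixed isomorphism type $A$ with $v(A)$ vertices and $f_i(A)$ faces of dimension $i$ satisfies $E(A)=\Theta\big(n^{\,v(A)-\sum_{i\ge 1}\alpha_i f_i(A)}\big)$: there are $\Theta(n^{v(A)})$ ways to place the vertices, and a fixed placement spans a copy of $A$ with probability $\prod_{i\ge1}p_i^{f_i(A)}=n^{-\sum_i\alpha_i f_i(A)}$, since every face of $A$ has its boundary inside $A$ and so the conditional fillings in the lower model simply multiply. Writing $\Delta f_i:=f_i(C')-f_i(C)\ge 0$ for the number of dimension-$i$ faces created by the operation, and using that no vertex is added (so $v(C)=v(C')$), I get $E(C)/E(C')=\Theta\big(n^{\sum_{i\ge1}\alpha_i\Delta f_i}\big)$. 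Hence the proposition is equivalent to the cost estimate $\sum_{i\ge1}\alpha_i\,\Delta f_i\ \ge\ \tfrac{w}{k+1}$.

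Next I would pin down the $\Delta f_i$ from the combinatorics of the operation, which attaches a single top-dimensional ($m$-)simplex $\sigma$ along a shared $(m-1)$-face $F$. Let $u$ be the vertex of $\sigma$ opposite $F$; it already lies in $C$ since no vertex is created. Every edge of $\sigma$ inside $F$ is present, so the only edges that can be new are the $m$ edges joining $u$ to $F$; let $N\subseteq F$ be the set of $w$ endpoints for which $\{u,x\}$ is new. The key observation is that any face of $\sigma$ containing a missing edge is itself missing from $C$, because $C$ is a simplicial complex. Thus every face $\{u\}\cup G$ with $G\cap N\neq\emptyset$ is genuinely new, which yields the clean lower bound $\Delta f_i\ \ge\ \binom{m}{i}-\binom{m-w}{i}$ for each $i\ge1$ (the number of $i$-subsets $G$ of the $m$-set $F$ meeting $N$). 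It will not matter whether the operation happens to share more than the one required face or creates still further faces — I only need this lower bound.

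With $c_i:=\binom{m}{i}-\binom{m-w}{i}\ge0$, it then suffices to prove the coordinatewise comparison
\[
\binom{m}{i}-\binom{m-w}{i}\ \ge\ \frac{w}{k+1}\binom{k+1}{i}\qquad(1\le i\le k+1),
\]
for once this holds I can sum against the nonnegative weights $\alpha_i$, discard the (nonnegative) terms with $i>k+1$, and conclude $\sum_{i\ge1}\alpha_i\Delta f_i\ge \frac{w}{k+1}\sum_{i=1}^{k+1}\binom{k+1}{i}\alpha_i=\frac{w}{k+1}S_1^k\ge\frac{w}{k+1}$, invoking the hypothesis $S_1^k\ge1$ in the last step. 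To prove the display I fix $i$ and set $g(w):=\binom{m}{i}-\binom{m-w}{i}$. Its increments $g(w)-g(w-1)=\binom{m-w}{i-1}$ are non-increasing in $w$, so $g$ is concave with $g(0)=0$; hence $g(w)/w$ is non-increasing and is minimized at $w=m$. Since $g(m)=\binom{m}{i}$, the inequality reduces to $\tfrac1m\binom{m}{i}=\tfrac1i\binom{m-1}{i-1}\ge \tfrac1i\binom{k}{i-1}=\tfrac{1}{k+1}\binom{k+1}{i}$, i.e. to $\binom{m-1}{i-1}\ge\binom{k}{i-1}$, which holds because $m-1\ge k$.

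The only genuinely delicate point is the second step: ensuring the lower bound $\Delta f_i\ge\binom{m}{i}-\binom{m-w}{i}$ is correct and robust. The subtlety is that $\sigma$ may share faces beyond the single required $(m-1)$-face, so one cannot blithely declare all faces through $u$ to be new; the clean way around this is to count only faces that contain an actually-missing edge, which are new for the structural reason above, and to accept the possible undercount — harmless, since I need a lower bound on the cost. Everything after that is the self-contained binomial estimate, whose crux is the concavity of $g$ together with $m\ge k+1$. It is reassuring that both inequalities used are tight (at $i=1$ for every $w$, and at $w=m$), which shows the constant $\tfrac{1}{k+1}$ cannot be improved by this line of argument.
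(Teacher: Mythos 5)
Your proof is correct and follows essentially the same route as the paper's: the same key lower bound $\Delta f_i \ge \binom{m}{i}-\binom{m-w}{i}$ (counting faces through the apex that contain a missing edge, using that all missing edges share the apex vertex), the same two-step binomial comparison $\binom{m}{i}-\binom{m-w}{i}\ge \tfrac{w}{m}\binom{m}{i}\ge\tfrac{w}{k+1}\binom{k+1}{i}$, and the same final summation against $S_1^k(\{\alpha_i\}_i)\ge 1$. The only differences are cosmetic: you prove the first inequality by concavity of $w\mapsto\binom{m}{i}-\binom{m-w}{i}$ rather than the paper's direct product expansion, and you spell out the reduction to the cost inequality via the explicit formula for $E(\cdot)$, which the paper leaves implicit.
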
 
\begin{proof}
        If $C'=C\cup_{Q}\Delta^m$, and the edge $e\subset C'$ is not in $C$, then all faces of $\Delta^m$ containing $e$ are also not in $C$.
        Therefore, the cost of any operation adding $e$ will be higher than $\sum_{i=1} ^{m}\binom{m-1}{i-1}\alpha_i$, as we only need to choose the vertices distinct from the endpoints of the edge. We now compare coefficient-wise to $S_1^k=\sum_{i=1} ^{k+1}\binom{k+1}{i}\alpha_i\geq 1$:
        $$\frac{\prod_{j=1}^{i-1}(m-j)}{(i-1)!}=\binom{m-1}{i-1}\geq\frac{\binom{k+1}{i}}{k+1}=\frac{\prod_{j=1}^{i-1}(k+1-j)}{i!}$$
        as $m>k$ and $i\ge 1$. So,
        $$\frac{\prod_{j=1}^{i-1}(m-j)}{\prod_{j=1}^{i-1}(k+1-j)}\geq 1\geq 1/i.$$
        Thus, we see that the cost of adding an edge is at least $\frac{1}{k+1}$.\par
        Moreover, assume we are performing an operation adding $w$ edges simultaneously.
        The expansion operation is supported on a complete $(m-1)$-face and another vertex $v$, so any missing edge contains $v$ and another vertex, of which there are $w$. 
        Thus, from the remaining faces, we add at least those which contain one of these $w$, of which there are at least $\binom{m}{l}-\binom{m-w}{l}$ simplexes of \di\ $l$.
        
        We wish to show $\binom{m}{l}-\binom{m-w}{l}\geq \frac{w}{m}\binom{m}{l}$:
        $$\prod_{j=0}^{l-1}(m-j)-\prod_{j=0}^{l-1}(m-w-j)\geq w\prod_{j=1}^{l-1}(m-j),$$
        so
        $$(m-w)\prod_{j=1}^{l-1}(m-j)\geq \prod_{j=0}^{l-1}(m-w-j).$$
        But this inequality is obvious as all multiplicands on the left are greater or equal than the ones on the right. $\frac{w}{m}\binom{m}{l}\geq \frac{w}{k+1}\binom{k+1}{l}$, finishing the proof.
\end{proof}
These expansion operations should thus be thought of as "decreasing the likelihood" of a complex appearing as a subcomplex. This is actually true for all expansion operations:
\begin{proposition}\label{opDecreasesE}
    Assume $S_1 ^k(\{\alpha_i\}_i)\geq 1$ and $m> k$. For a strongly connected $m$-\di al complex $D$ denote by $D'$ the result of some expansion operation performed on $D$, adding (among others) an $l$-\di al simplex. 
    If either $S_1 ^k(\{\alpha_i\}_i)> 1$ or the operation does not add vertices and $\alpha_l >0$, then $E(D')= o(E(D))$.
\end{proposition}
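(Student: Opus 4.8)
The plan is to reduce everything to the budget/cost bookkeeping and a single combinatorial inequality. For a fixed finite complex $A$, the expected number of its copies in $\underline{X}(n;\{\alpha_i\})$ satisfies $\log_n E(A)=v(A)-\sum_{i\ge 1}\alpha_i f_i(A)+o(1)$, where $v(A)$ and $f_i(A)$ count the vertices and the $i$-faces of $A$: there are $\Theta(n^{v(A)})$ placements of the vertices, and each $i$-face contributes a factor $n^{-\alpha_i}$ once its boundary is present. Consequently, if the operation $D\to D'$ introduces $\Delta v$ new vertices and $\Delta f_i$ new $i$-faces, then $E(D')/E(D)=\Theta(n^{-c})$ with the $n$-independent cost $c=\sum_{i\ge 1}\alpha_i\,\Delta f_i-\Delta v$, and $E(D')=o(E(D))$ is equivalent to $c>0$. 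So I would prove $c>0$ under each of the two hypotheses.

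First I would dispatch the case where no vertex is added and $\alpha_l>0$. Here $\Delta v=0$, and the newly attached $l$-simplex $\tau$ is not a face of $D$ (it is either of dimension $l>m=\dim D$, or a genuinely new top $m$-face), so $\Delta f_l\ge 1$. Since every $\alpha_i\ge 0$, all summands of $\sum_i\alpha_i\Delta f_i$ are nonnegative and $c\ge\alpha_l\Delta f_l\ge\alpha_l>0$, as desired.

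The substantive case is $S_1^k>1$ with $a:=\Delta v\ge 1$. I write the vertex set of $\tau$ as $a$ new vertices together with $b:=(l+1)-a$ old vertices; since $\tau$ contains the shared $(m-1)$-face, $b\ge m$. Discarding every new face lying among old vertices only (these can only raise $c$), I bound $c$ below by the contribution of faces of $\tau$ meeting a new vertex, of which there are $N_j=\binom{a+b}{j+1}-\binom{b}{j+1}$ in dimension $j$, so $c\ge\sum_{j\ge 1}\alpha_j N_j-a$. The key step is the inequality $N_j\ge a\binom{k+1}{j}$ for every $j$: by the hockey-stick identity $N_j=\sum_{s=b}^{a+b-1}\binom{s}{j}$, a sum of $a$ terms each at least $\binom{k+1}{j}$ provided $s\ge b\ge k+1$. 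Summing against the $\alpha_j$ and using $\sum_{j\ge 1}\binom{k+1}{j}\alpha_j=S_1^k$ then gives $\sum_j\alpha_j N_j\ge a\,S_1^k$, whence $c\ge a\,(S_1^k-1)>0$; combined with $E(D')/E(D)=\Theta(n^{-c})$ this yields the claim.

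The main obstacle, and the place where the hypotheses genuinely bite, is exactly securing $b\ge k+1$, i.e. forcing each new vertex to attach to at least $k+1$ existing vertices. The available bound is only $b\ge m\ge k$, and the boundary case $b=k$ is delicate: there a single vertex spanning a fresh $m$-simplex costs merely $S_1^{m-1}-1=S_1^{k-1}-1$, which need not be positive, so the strict gain in the vertex case rests on reaching $b\ge k+1$ (in line with the adjacent proposition's use of $m>k$), on $\alpha_l>0$ in the vertex-free case, and on the strictness $S_1^k>1$ to upgrade $c\ge 0$ to $c>0$. Verifying the combinatorial inequality cleanly across all admissible $(a,b,j)$, and confirming that the expansion structure indeed delivers the needed attachment count, is the part I expect to require the most care.
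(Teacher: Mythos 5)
Your proposal takes the same route as the paper's proof: the same exponent bookkeeping $\log_n E(A)=f_0(A)-\sum_{i\ge 1}\alpha_i f_i(A)+o(1)$, the same reduction to positivity of the cost $c=\sum_i\alpha_i\,\Delta f_i-\Delta v$, and the same split into the vertex-free and vertex-adding cases. The vertex-free case is identical. In the vertex-adding case the paper is terse --- it asserts that adding a vertex ``costs at least $S_1^m(\{\alpha_i\}_i)-1$'' and concludes from $S_1^m\ge S_1^k>1$ --- whereas your hockey-stick computation $N_j=\binom{a+b}{j+1}-\binom{b}{j+1}=\sum_{s=b}^{a+b-1}\binom{s}{j}\ge a\binom{k+1}{j}$, giving $c\ge a\,(S_1^k-1)>0$, is a complete proof of that step whenever every new vertex attaches to at least $k+1$ old vertices, i.e.\ whenever $m\ge k+1$, and it handles several new vertices and an added simplex of arbitrary dimension uniformly.

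The obstacle you flag at $m=k$ is genuine, and it is a defect of the paper's argument rather than of yours. The cheapest vertex-adding expansion on an $m$-dimensional strongly connected complex glues an $m$-simplex along an existing $(m-1)$-face, and its cost is $S_1^{m-1}-1$, not $S_1^m-1$; this matches the paper's own table in \Cref{expDim2}, where the vertex-adding operation on a $2$-dimensional component costs $2\alpha_1+\alpha_2-1=S_1^1-1$. So the paper's inequality chain implicitly needs $m-1\ge k$, i.e.\ the hypothesis $m>k$ that the proposition preceding \Cref{opDecreasesE} does assume. With only $m\ge k$ the statement is false: for $m=k=1$, $\alpha_1=2/5$, $\alpha_2=3/10$ one has $S_1^1=11/10>1$, yet attaching a pendant edge carrying one new vertex to a connected graph $D$ is a legitimate expansion operation with $E(D')=\Theta\bigl(n^{1-\alpha_1}E(D)\bigr)$, which is not $o(E(D))$. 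Your proof therefore establishes the proposition in every case in which it is true, and the case you could not close cannot be closed as stated. This is not purely cosmetic: the invocation in \Cref{steenChap} applies the proposition to expansions attached along $(k+r-1)$-dimensional faces under the hypothesis $S_1^{k+r}>1$, which is exactly this boundary situation, so that application needs either the stronger hypothesis $S_1^{k+r-1}>1$ or a restatement of the proposition in terms of the strong-connectivity dimension.
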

\begin{proof}
    By the proof of \Cref{propBudCostLower}, $E(D)$ is of the form $n^{f_0(D)-\sum_{i=1} ^\infty f_i(D) \alpha_i}$ (up to a constant factor), where $f_i(D)$ denotes the number of $i$-\di al faces of $D$.
    If the operation added a vertex, this costs at least $S^{m-1} _1(\{\alpha_i\}_i) -1$, and since $S^{m-1} _1(\{\alpha_i\}_i)\ge S^k _1(\{\alpha_i\}_i)>1$ the result follows. 
    Otherwise, we know that $f_0(D')= f_0(D)$, $f_i(D')\ge f_i(D)$ for all $i$ and that $f_l(D')>f_l(D)$, and so $n^{f_0(D')-\sum_{i=1} ^\infty f_i(D') \alpha_i}=o(n^{f_0(D)-\sum_{i=1} ^\infty f_i(D) \alpha_i})$ as $\alpha_l >0$.
\end{proof}
\begin{corollary}    \label{finIsoTypes}
        For $\{\alpha_i\}_i$ where $S^k _1(\{\alpha_i\}_i)>1$ and $m>k$, only a finite number of isomorphism types of strongly connected components $A$ have $E(A\subset\underline{X})\not\to 0$.
\end{corollary}
    \begin{proof}
     By \Cref{opDecreasesE}, all expansion operations decrease the asymptotics of $E(\bullet\subset\underline{X})$.
     In particular, if we find a number $v_0$ such that any $m$-\di al component $A$ with $|A|>v_0$ has $E(A\subset\underline{X})\to 0$, the claim will follow.

    Denote the budget by $b$, and the cost of a vertex adding operation by $c_m:=\sum_{i=1}^m\binom{m}{i}\alpha_i-1\ge S_1^k-1>0$. If $A$ has $a$ vertices, $E(A\subset\underline{X})\sim n^{b-(a-m-1)c_m}$. For $c_m>\frac{b}{a-m-1}$ the exponent will be negative, so the expectation will go to 0.

    Furthermore, the probability that any component has more vertices than $\frac{b}{c_m}+m+1$ is bounded by the sum of probabilities of the components with exactly $\ceil{\frac{b}{c_m}+m+1}$ vertices (which is finite), and still tends to 0.
    \end{proof}

\section{Cup Product for $k=1$} \label{cup1}
\begin{proof}[Proof of \Cref{thmcup} for $k=1$]
    By \Cref{cor:cupdim} it suffices to examine the possible strong connectivity components of \di \ $2k=2$.
    There are $4$ possible expansion operations $A\mapsto A\cup_Q \Delta^2$ in \di\ 2 (illustrated in \Cref{expDim2}): 
    \begin{enumerate}[(a)]
        \item Adding a vertex, i.e $Q=\Delta^1$.
        \item Connecting a vertex and an unrelated edge, i.e $Q=\Delta^1\sqcup \Delta^0$.
        \item A "horn-filling"- $Q=\Delta^1\cup_{\Delta^0}\Delta^1$.
        \item A "sphere filling"- $Q=\partial(\Delta^2)$.
    \end{enumerate}

    By \Cref{finIsoTypes} we only have to prove that a finite collection of possible strongly connected 2-\di al components have a trivial cup product.
    The operations composing their growing processes have the following homotopical effects:
 Operations (a) and (c) induce homotopy equivalences. Operation (d) is akin to gluing a 2-cell, and thus cannot increase the number of generators of $\pi_1$. 
    Operation (b) is akin to attaching a 1-cell, named $e_1$ as a path. This can only increase the minimal number of generators of $\pi_1$ by 1, because of van Kampen's theorem: cover the new complex by everything but the middle of $e_1$, and a circle containing $e_1$.
    \par Assume a strongly connected 2-complex $A$ requires at least $\beta$ of operation (b) to construct. Then the expected number of times $A$ appears in $X$ is no greater than $n^{2-\beta(2\alpha_1+\alpha_2)}=o(n^{2-\beta})$ since $S_1^1=2\alpha_1+\alpha_2>1$ in our range. 
    Therefore, only complexes where $\beta=0,1$ can appear. This means that $\pi_1(A)$, and thus $H_1(A)$, have at most 1 generator. By the universal coefficient theorem, $H_1(A)\simeq H^1(A)$, since $H_0(A)$ is always free, and thus $H^1(A)$ also has at most a single generator.
    \begin{remark}
        Since for $c_1,c_2\in H^1$, $c_1\cup c_2=(-1)^{1\cdot 1}c_2\cup c_1=-c_2\cup c_1$, this finishes the proof for rings where $2$ is not a zero-divisor.
    \end{remark}
    \par To have $H^1\neq 0$ one needs a non-trivial $\pi_1$, so we may assume $\beta=1$. Define $v\in A$ to be an \textbf{internal vertex} if $H^1(lk(v))\ne 0$, and \textbf{external} otherwise. In particular, an internal $v$ is contained in at least 3 triangles.
    During a growing process, a vertex may become internal as a result of applying operation (c) or (d). Operation (c) costs $n^{-\alpha_1-\alpha_2}<n^{-1/2}$, and thus may only happen once (in addition to operation (b)). 
    Operation (d) does not have such a limitation, however the last edge of the triangle along which (d) is attached has to be added by either operation (c) or (b), since we are adding an edge but not a vertex.
    If it is operation (b), 
% Pattern Info
    then any path between the unpatterned triangles in \Cref{fig:proofk1} is homotopic to a path outside the patterned triangles. Since the complex was simply connected before attaching the patterned triangles, it returns to being so after their addition, and thus the strong connectivity component will have a trivial $\pi_1$ and thus no cup product.

    Otherwise, we may only have 4 internal vertices- 1 from operation (c) and 3 from the single application of operation (d) it enables. In addition there might be 1 vertex with a disconnected link (created by operation (b)). 
         \usetikzlibrary{patterns}
\tikzset{
pattern size/.store in=\mcSize, 
pattern size = 5pt,
pattern thickness/.store in=\mcThickness, 
pattern thickness = 0.3pt,
pattern radius/.store in=\mcRadius, 
pattern radius = 1pt}
\makeatletter
\pgfutil@ifundefined{pgf@pattern@name@_e5xr52g8o}{
\pgfdeclarepatternformonly[\mcThickness,\mcSize]{_e5xr52g8o}
{\pgfqpoint{0pt}{-\mcThickness}}
{\pgfpoint{\mcSize}{\mcSize}}
{\pgfpoint{\mcSize}{\mcSize}}
{
\pgfsetcolor{\tikz@pattern@color}
\pgfsetlinewidth{\mcThickness}
\pgfpathmoveto{\pgfqpoint{0pt}{\mcSize}}
\pgfpathlineto{\pgfpoint{\mcSize+\mcThickness}{-\mcThickness}}
\pgfusepath{stroke}
}}
\makeatother
%dfghdcf
\tikzset{
pattern size/.store in=\mcSize, 
pattern size = 5pt,
pattern thickness/.store in=\mcThickness, 
pattern thickness = 0.3pt,
pattern radius/.store in=\mcRadius, 
pattern radius = 1pt}
\makeatletter
\pgfutil@ifundefined{pgf@pattern@name@_7la7libyc}{
\pgfdeclarepatternformonly[\mcThickness,\mcSize]{_7la7libyc}
{\pgfqpoint{0pt}{0pt}}
{\pgfpoint{\mcSize+\mcThickness}{\mcSize+\mcThickness}}
{\pgfpoint{\mcSize}{\mcSize}}
{
\pgfsetcolor{\tikz@pattern@color}
\pgfsetlinewidth{\mcThickness}
\pgfpathmoveto{\pgfqpoint{0pt}{0pt}}
\pgfpathlineto{\pgfpoint{\mcSize+\mcThickness}{\mcSize+\mcThickness}}
\pgfusepath{stroke}
}}
\makeatother
\tikzset{every picture/.style={line width=0.75pt}} %set default line width to 0.75pt        
\begin{figure}[h]
    \centering
$$\begin{tikzpicture}[x=0.75pt,y=0.75pt,yscale=-1,xscale=1]
%uncomment if require: \path (0,300); %set diagram left start at 0, and has height of 300
%Shape: Polygon [id:ds02116723121272135] 
\draw   (391,105) -- (390,206) -- (340,156) -- cycle ;
%Shape: Polygon [id:ds7977611498618582] 
\draw   (237.05,105.99) -- (235,207) -- (185,157) -- cycle ;
%Shape: Polygon [id:ds7023139050387472] 
\draw  [pattern=_e5xr52g8o,pattern size=6pt,pattern thickness=0.75pt,pattern radius=0pt, pattern color={rgb, 255:red, 255; green, 0; blue, 0}] (235.65,207) -- (237.05,105.99) -- (340,156) -- cycle ;
%Shape: Polygon [id:ds9879723815789456] 
\draw  [pattern=_7la7libyc,pattern size=6pt,pattern thickness=0.75pt,pattern radius=0pt, pattern color={rgb, 255:red, 19; green, 212; blue, 19}] (237.05,105.99) -- (391,105) -- (340,156) -- cycle ;

\end{tikzpicture}
$$
    \caption{The top green patterned triangle was added by operation (d) (the last edge existing outside the diagram), while the middle red patterned triangle was added by (b).}
    \label{fig:proofk1}
\end{figure}
    \begin{lemma}
        Let $X$ be a 2-dimensional strongly connected complex, and assume $v$ is an external vertex of $X$. Then $X$ has a non-trivial cup product if and only if $X \setminus v$ has a non-trivial cup product.
    \end{lemma}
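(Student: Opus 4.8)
The plan is to realize the passage from $X$ to $Y := X\setminus v$ as coning off the link of $v$ and to read off the effect on cohomology via Mayer--Vietoris. Write $L$ for the link of $v$ in $X$ and $S$ for its closed star, so that $S = v\ast L$ is a cone and hence contractible, while $Y$ is the full subcomplex of $X$ spanned by the remaining vertices. These two subcomplexes form an excisive cover $X = Y\cup S$ with $Y\cap S = L$. Since $X$ is $2$-dimensional, $L$ is a graph, and the hypothesis that $v$ is external means precisely that $L$ contains no cycle, i.e. $L$ is a forest; consequently $H^1(L;R)=0$ (and $H^2(L;R)=0$ for dimension reasons) over every coefficient ring $R$. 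Note that strong connectivity of $X$ plays no role in the argument and could be dropped.

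Feeding this into the Mayer--Vietoris sequence of the cover, and using $\widetilde H^{\ast}(S;R)=0$, I would extract exactly two facts about the restriction map $i^{\ast}\colon H^{\ast}(X;R)\to H^{\ast}(Y;R)$ induced by the inclusion $i\colon Y\hookrightarrow X$. First, the segment
\[
H^{1}(L)\longrightarrow H^{2}(X)\longrightarrow H^{2}(Y)\oplus H^{2}(S)\longrightarrow H^{2}(L)
\]
has its two flanking terms $H^1(L)$ and $H^2(L)$ vanishing (the first by externality, the second by dimension), as does $H^2(S)$, so it collapses to $0\to H^2(X)\to H^2(Y)\to 0$ and $i^{\ast}$ is an isomorphism on $H^{2}$. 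Second, the segment
\[
H^{1}(X)\longrightarrow H^{1}(Y)\oplus H^{1}(S)\longrightarrow H^{1}(L)
\]
shows, since $H^{1}(S)=0=H^{1}(L)$, that $i^{\ast}$ is surjective on $H^{1}$. (Its kernel in degree $1$ may be non-trivial when $L$ is disconnected, but this will be irrelevant below.)

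I would then conclude by naturality of the cup product, recalling that since $X$ is $2$-dimensional a non-trivial cup product of positive-dimensional classes must be of the form $H^{1}\times H^{1}\to H^{2}$. If $Y$ has a non-trivial cup product, pick $\bar a,\bar b\in H^{1}(Y)$ with $\bar a\cup\bar b\neq 0$; lifting them through the surjection $i^{\ast}$ to $a,b\in H^{1}(X)$ gives $i^{\ast}(a\cup b)=\bar a\cup\bar b\neq 0$, whence $a\cup b\neq 0$ and $X$ has a non-trivial cup product. Conversely, if $a\cup b\neq 0$ in $H^{2}(X)$, then since $i^{\ast}$ is injective on $H^{2}$ we get $i^{\ast}a\cup i^{\ast}b=i^{\ast}(a\cup b)\neq 0$, so $Y$ has a non-trivial cup product.

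The only genuinely delicate point is the cohomological input, namely the identification $H^{1}(L;R)=0$ coming from externality: it is this single vanishing that forces both the surjectivity of $i^{\ast}$ in degree $1$ and its injectivity (indeed bijectivity) in degree $2$, and hence lets the cup product be transported in both directions. Everything else is the standard excisive Mayer--Vietoris sequence together with naturality of $\cup$, so no serious obstacle remains once the decomposition $X=Y\cup S$ is set up.
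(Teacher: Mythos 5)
Your proof is correct, but it follows a genuinely different route from the paper's. The paper argues homotopy-theoretically: externality makes the link $L$ of $v$ a forest, so either $L$ is a tree, in which case erasing $v$ together with its star does not change the homotopy type (the star is a cone over a contractible link), or $L$ is a disjoint union of trees, in which case the paper ``separates'' $v$ into one cone point per component of $L$ and identifies $X$, up to homotopy, with $X\setminus v$ wedged with some circles, which were already shown to be irrelevant to the cup-product question. You instead keep the decomposition $X=(X\setminus v)\cup(v\ast L)$ with intersection $L$ and run Mayer--Vietoris: the vanishing $H^1(L;R)=0$ (forest) together with contractibility of the star gives that the restriction $i^*$ is surjective on $H^1$ and bijective on $H^2$, and naturality of $\cup$ then transports non-trivial products in both directions. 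Both arguments hinge on the same geometric input (external $=$ the link is a forest), but yours is more uniform and self-contained: there is no case split on connectivity of $L$, no informal separation construction, validity over an arbitrary coefficient ring is manifest, and, as you observe, strong connectivity of $X$ is never used. What the paper's route buys is stronger information --- an identification of the homotopy type of $X$ in terms of $X\setminus v$ up to wedge summands --- which fits the surrounding argument, where complexes are repeatedly simplified up to homotopy until only the subcomplex on internal vertices remains. One minor point worth recording in your write-up: since your cover is by subcomplexes, the simplicial Mayer--Vietoris sequence applies directly, so no excisiveness needs to be verified.
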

    \begin{proof}
    The link of an external vertex in a 2-dimensional complex is a cycle-free graph, so it may be either contractible or a disjoint union of contractible components.
    Erasing a vertex with a contractible link (and all faces containing it) does not affect the homotopy type of the complex.
    This is because any vertex is a cone-point above its link, and if the link is contractible then it may safely be erased along with all simplexes containing it.\par
    If a vertex $v$ has a disconnected link, let $A_i$ be the connected components of $lk(v)$. Define a new complex $X'=((X\setminus v)\cup_{A_1}C(A_1)) \cup_{A_2}C(A_2)...$, where $C(A)$ is the cone over $A$. Denote by $\crb{v_i}$ the new vertices (the cone-points).
    Define $X''=X'\cup\bigcup_{i}(v_i,v_{i+1})$. The mapping $X''\to X$, that sends all $v_i$ to $v$ and is the identity on $X\setminus v$, is clearly an equivalence.
    But $X''\sim X'\vee \bigvee_{i=1}^l S^1$ for some $l$, and these $S^1$ are not part of the strongly connected 2-\di al component, so do not affect the cup product. Thus, it is enough to look at $X'$, where all $v_i$ have contractible links, and by the previous claim it is enough to look at $X\setminus v$. 
    
    See \Cref{seperationExp} for an example of this process.\end{proof}
    
\begin{figure}[h]
    \centering
    \tikzset{every picture/.style={line width=0.75pt}} %set default line width to 0.75pt        

\begin{tikzpicture}[x=0.75pt,y=0.75pt,yscale=-0.5,xscale=0.5]
%uncomment if require: \path (0,300); %set diagram left start at 0, and has height of 300

%Shape: Triangle [id:dp736164700426237] 
\draw  [fill={rgb, 255:red, 155; green, 155; blue, 155 }  ,fill opacity=1 ] (147,129) -- (182,242) -- (112,242) -- cycle ;
%Shape: Triangle [id:dp775139456327878] 
\draw  [fill={rgb, 255:red, 155; green, 155; blue, 155 }  ,fill opacity=1 ] (147,129) -- (227.36,42.19) -- (262.36,102.81) -- cycle ;
%Shape: Triangle [id:dp5945617461728145] 
\draw  [fill={rgb, 255:red, 155; green, 155; blue, 155 }  ,fill opacity=1 ] (147,129) -- (31.64,102.81) -- (66.64,42.19) -- cycle ;
%Shape: Triangle [id:dp9381310534217522] 
\draw  [fill={rgb, 255:red, 155; green, 155; blue, 155 }  ,fill opacity=1 ] (497,153) -- (532,266) -- (462,266) -- cycle ;
%Shape: Triangle [id:dp17061599434635832] 
\draw  [fill={rgb, 255:red, 155; green, 155; blue, 155 }  ,fill opacity=1 ] (525,107) -- (605.36,20.19) -- (640.36,80.81) -- cycle ;
%Shape: Triangle [id:dp7442179196538858] 
\draw  [fill={rgb, 255:red, 155; green, 155; blue, 155 }  ,fill opacity=1 ] (470,107) -- (354.64,80.81) -- (389.64,20.19) -- cycle ;
%Shape: Circle [id:dp3604237107030763] 
\draw  [fill={rgb, 255:red, 0; green, 0; blue, 0 }  ,fill opacity=1 ] (141.5,129) .. controls (141.5,125.96) and (143.96,123.5) .. (147,123.5) .. controls (150.04,123.5) and (152.5,125.96) .. (152.5,129) .. controls (152.5,132.04) and (150.04,134.5) .. (147,134.5) .. controls (143.96,134.5) and (141.5,132.04) .. (141.5,129) -- cycle ;
%Shape: Circle [id:dp49374764318809583] 
\draw  [fill={rgb, 255:red, 0; green, 0; blue, 0 }  ,fill opacity=1 ] (519.5,107) .. controls (519.5,103.96) and (521.96,101.5) .. (525,101.5) .. controls (528.04,101.5) and (530.5,103.96) .. (530.5,107) .. controls (530.5,110.04) and (528.04,112.5) .. (525,112.5) .. controls (521.96,112.5) and (519.5,110.04) .. (519.5,107) -- cycle ;
%Shape: Circle [id:dp6893090157158459] 
\draw  [fill={rgb, 255:red, 0; green, 0; blue, 0 }  ,fill opacity=1 ] (464.5,107) .. controls (464.5,103.96) and (466.96,101.5) .. (470,101.5) .. controls (473.04,101.5) and (475.5,103.96) .. (475.5,107) .. controls (475.5,110.04) and (473.04,112.5) .. (470,112.5) .. controls (466.96,112.5) and (464.5,110.04) .. (464.5,107) -- cycle ;
%Shape: Circle [id:dp5103763322156964] 

%Shape: Circle [id:dp10463331229822637] 
\draw  [fill={rgb, 255:red, 0; green, 0; blue, 0 }  ,fill opacity=1 ] (491.5,153) .. controls (491.5,149.96) and (493.96,147.5) .. (497,147.5) .. controls (500.04,147.5) and (502.5,149.96) .. (502.5,153) .. controls (502.5,156.04) and (500.04,158.5) .. (497,158.5) .. controls (493.96,158.5) and (491.5,156.04) .. (491.5,153) -- cycle ;
%Straight Lines [id:da6475651227065742] 
\draw    (470,107) -- (497,153) ;
%Straight Lines [id:da22993599914065488] 
%Straight Lines [id:da5277291647029594] 
\draw    (525,107) -- (497,153) ;
\end{tikzpicture}
    \caption{An example of the process of separation.}
    \label{seperationExp}
\end{figure}

 All in all, we may restrict to the subcomplex induced on the internal vertices, of which we have at most 4. This may only be (a boundary of) a tetrahedron, which plainly does not have a cup product in any coefficient ring.
\end{proof} 

\section{Cup Product for higher $k$}\label{cupK}
By \Cref{cor:cupdim}, it is enough to examine strong connectivity components of \di\ $2k$, where $k$ is the minimal number s.t $S_1 ^k(\{\alpha_i\}_i)\geq 1$.
\begin{proposition}\label{vertAmount}
    For $k\ge 2$, assume $p_i=n^{-\alpha_i}, \alpha_i>0$, such that $S_1 ^k(\{\alpha_i\}_i)\geq 1$ and $S_2 ^{2k}(\{\alpha_i\}_i)<2k+2$. Then, a.a.s no $2k$-\di al strong connectivity component has more than $2k + 1 + a(k)$ vertices, where $a(2)=4, a(3)=3$ and $ a(k)=2,k\geq 4$. 
\end{proposition}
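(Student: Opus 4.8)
The plan is to reduce, exactly as in \Cref{finIsoTypes}, to an estimate on the expected number of appearances of a strongly connected component of dimension $2k$, since by \Cref{lemCupSteenConc} a cup product landing in $H^{2k}$ is concentrated on such a component. So let $A$ be strongly connected of dimension $2k$ with $2k+1+m$ vertices. By \Cref{vertBeforeAles} I may build $A$ from a single $2k$-simplex by performing all $m$ vertex-adding operations first and only afterwards operations that add no vertex. Writing $f_i(A)$ for the number of $i$-faces, recall that $\log_n E(A)\to (2k+1+m)-\sum_i f_i(A)\alpha_i$; I will show this limit is negative as soon as $m\ge a(k)+1$, which (together with the finiteness/containment argument below) gives the claim.

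First I isolate the two quantities that drive the estimate. The starting $2k$-simplex contributes the budget $b:=(2k+1)-\sum_{i=1}^{2k}\binom{2k+1}{i+1}\alpha_i$. Attaching a new vertex $v$ to an existing $(2k-1)$-face creates, for every nonempty subset $S$ of the $2k$ vertices of that face, the simplex $\{v\}\cup S$ of dimension $|S|$; hence it adds $\binom{2k}{i}$ simplices of each dimension $i\ge 1$ together with one new vertex, so its cost is at least $c:=\sum_{i=1}^{2k}\binom{2k}{i}\alpha_i-1=S_1^{2k-1}(\{\alpha_i\}_i)-1$ (attaching along more faces only raises the cost). Since $S_1^m$ is non-decreasing in $m$ and $2k-1\ge k$, the hypothesis $S_1^k\ge 1$ gives $c\ge 0$; in the setting of \Cref{thmcup} we actually have $S_1^k>1$ (the case $S_1^k=1$ being excluded), so $c>0$. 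The $m$ vertex-adding operations thus contribute total cost at least $mc$, and by \Cref{opDecreasesE} the remaining non-vertex operations only decrease $\log_n E$ (as all $\alpha_i>0$); hence $\log_n E(A)\le b-mc$.

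It therefore suffices to establish the strict linear inequality $b<(a(k)+1)\,c$, for then any $A$ with $m\ge a(k)+1$ satisfies $\log_n E(A)\le b-mc<0$. Rearranging, $b<(a(k)+1)c$ is equivalent to
\[
\sum_{i=1}^{2k}\Big[\binom{2k+1}{i+1}+(a(k)+1)\binom{2k}{i}\Big]\alpha_i\;>\;2k+2+a(k).
\]
Because all $\alpha_i>0$, I bound the left-hand side below by dropping the terms with $i>k+1$ and invoking $S_1^k(\{\alpha_i\}_i)=\sum_{i=1}^{k+1}\binom{k+1}{i}\alpha_i>1$; it then suffices to verify the coefficient-wise comparison
\[
\binom{2k+1}{i+1}+(a(k)+1)\binom{2k}{i}\;\ge\;(2k+2+a(k))\binom{k+1}{i}\qquad(1\le i\le k+1),
\]
since this makes the left-hand side at least $(2k+2+a(k))\,S_1^k>2k+2+a(k)$.

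The heart of the matter is this coefficient comparison, and the binding index is $i=1$: there it reduces to $a(k)(k-1)\ge k+2$, i.e.\ $a(k)\ge\frac{k+2}{k-1}$, which is met with equality or slack precisely by $a(2)=4$, $a(3)=3$ and $a(k)=2$ for $k\ge 4$ — this is exactly what pins down the stated values. For $2\le i\le k+1$ the left side grows in $i$ far faster than the right, so the inequality holds comfortably; this is a routine monotonicity check on ratios of binomial coefficients which I will carry out but do not reproduce here. Finally, to pass from the threshold $2k+2+a(k)$ to all larger components I reuse the spanning-tree leaf-removal argument of \Cref{vertBeforeAles}: any strongly connected $2k$-component with more than $2k+1+a(k)$ vertices contains a strongly connected one with exactly $2k+2+a(k)$ vertices, of which there are finitely many isomorphism types, each with $\log_n E\to b-(a(k)+1)c<0$; hence a.a.s.\ none of them, and so none of the larger components containing them, appears. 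I expect the main obstacle to be the coefficient comparison itself, and keeping the budget/cost bookkeeping honest at the extremes — a vertex attached along several faces (which only helps) and the degenerate boundary $S_1^k=1$ — both of which are resolved by the strict inequality $S_1^k>1$ furnished by \Cref{thmcup}.
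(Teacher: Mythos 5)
Your proof is correct and takes essentially the same route as the paper's: a budget/cost accounting (initial $2k$-simplex budget $b$, vertex-addition cost $c$) combined with a coefficient-wise comparison against $S_1^k(\{\alpha_i\}_i)\ge 1$, using \Cref{vertBeforeAles} to put vertex additions first and \Cref{opDecreasesE} to dispose of the remaining operations. The differences are only bookkeeping: the paper bounds $b<2-\frac{1}{k+1}$ and $c\ge 1-\frac{2}{k+1}$ separately and divides, giving the threshold $\frac{2k+1}{k-1}=1+\frac{k+2}{k-1}$, which is algebraically identical to your binding case $a(k)(k-1)\ge k+2$ at $i=1$; and where you lean on $S_1^k>1$ for strictness, note that both your argument and the paper's get strictness for free from the strictly positive dropped terms $k+2\le i\le 2k$ (all $\alpha_i>0$), so the boundary case $S_1^k=1$ allowed by the proposition is covered as well.
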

\begin{proof}
    We know that $S_1 ^k=\sum_{i=1} ^{k+1}\binom{k+1}{i}\alpha_i\ge 1$ and $S_2^{2k}=\sum_{i=1} ^{2k}\binom{2k+2}{i+1}\alpha_i< 2k+2$. A strongly-connected component of \di \ $2k$ starts with a budget (see \Cref{defBudgetCost}) of 
    $$\lim_{n\to \infty}\log_n(E(\Delta^{2k}\subset X))=2k+1-\sum_{i=1} ^{2k}\binom{2k+1}{i+1}\alpha_i.$$
    Using $\sum_{i=1} ^{k+1}\binom{k+1}{i}\alpha_i\geq 1$ and $k\geq 2$ we see that:
        $$\sum_{i=1} ^{2k}\binom{2k+1}{i+1}\alpha_i=\sum_{i=1} ^{2k}\frac{\prod_{j=0}^i(2k+1-j)}{(i+1)!}\alpha_i\ge $$$$\sum_{i=1} ^{k+1}\frac{(2k+1)(2k+1-i)2^{i-1}\prod_{j=0}^{i-1}(k+1-j)}{(i+1)!(k+1)}\alpha_i+\sum_{i=k+2} ^{2k}\binom{2k+1}{i+1}\alpha_i\ge
        $$$$\frac{\binom{2k+1}{2}}{k+1}\prs{\sum_{i=1} ^{k+1}\binom{k+1}{i}\alpha_i}+\sum_{i=k+2} ^{2k+1}\binom{2k+1}{i+1}\alpha_i> \frac{\binom{2k+1}{2}}{k+1}=\frac{(2k+1)k}{k+1}=2k+1-2+\frac{1}{k+1}.$$
        This means that our initial budget is less than $2-\frac{1}{k+1}$. 
        Additionally, the cost of adding a vertex is $1-\sum_{i=1} ^{2k}\binom{2k}{i}\alpha_i$. By the same method, we know that 
        $$\sum_{i=1} ^{2k}\binom{2k}{i}\alpha_i\geq \frac{2k}{k+1}\prs{\sum_{i=1} ^{k+1}\binom{k+1}{i}\alpha_i}\geq \frac{2k}{k+1}=2-\frac{2}{k+1}$$
        thus, adding a vertex costs at least $-1+(2-\frac{2}{k+1})=1-\frac{2}{k+1}$.
        This, in turn, means that the number of additional vertices (not present in the initial simplex) a component can have is very restricted when $k\geq 2$. Specifically, for $k=2$ it may have at most 4 additional vertices, for $k=3$ only 3, and for $k\geq 4$ only 2.
\end{proof}
    
\begin{proof}[Proof of \Cref{thmcup} for $k\geq 2$]
    We have bounded the possible numbers of vertices in strongly connected $2k$-components in \Cref{vertAmount}, and now only need to address the small number of possible topologies that may be supported on so few vertices. Let $C$ be a $2k$-\di al strongly connected component. Dividing into cases by the number of vertices $C$ has over $2k+1$:
    \begin{enumerate}
        \item[\textbf{0,1}:] Here the only possible homotopy types are $*,S^{2k}$, both of which have a cup length of 1.

        \item[\textbf{2}:] If 2 vertices were added, the remaining budget is less than
        $$2-\frac{1}{k+1}+2(\frac{2}{k+1}-1)=\frac{3}{k+1}.$$
        By \Cref{prop:edgeCost}, the cost of an edge is at least $\frac{1}{k+1}$, thus in a growing process of $C$ at most 2 edges are added by operations not adding vertices.
        Any $2k$-\di al complex on $2k+3$ vertices has at most $\binom{2k+3}{2}=2k^2+5k+3$ edges. 
        Since we may add only 2 edges in addition to the ones added while adding vertices, and a vertex adding operation increases the number of edges by $2k$, we have at most $\binom{2k+1}{2}+4k+2=\frac{4k^2+2k+8k+4}{2}=2k^2+5k+2$, which is 1 less than the complete complex. 
        
        This means that $C$ has a pair of vertices that are not connected by an edge. Denote them by $v,w$.
            \begin{center}
        \begin{tikzpicture}[x=0.75pt,y=0.75pt,yscale=-1,xscale=1]
%uncomment if require: \path (0,300); %set diagram left start at 0, and has height of 300

%Shape: Circle [id:dp6314681860503732] 

%Straight Lines [id:da2925002954692024] 
\draw    (91.25,71.25) -- (48.25,77.25) ;
%Straight Lines [id:da9062146583360271] 
\draw    (48.25,77.25) -- (102.25,111.25) ;
%Straight Lines [id:da6527690291475796] 
\draw    (119.25,49.25) -- (48.25,77.25) ;
%Straight Lines [id:da33215228750422376] 
\draw    (158.25,79.25) -- (102.25,111.25) ;
%Straight Lines [id:da2914456838455004] 
\draw    (119.25,49.25) -- (158.25,79.25) ;
%Straight Lines [id:da1590258206295112] 
\draw    (119.25,49.25) -- (91.25,71.25) ;
%Straight Lines [id:da24964285934972397] 
\draw    (91.25,71.25) -- (102.25,111.25) ;
%Straight Lines [id:da1972378210348893] 
\draw  [dash pattern={on 4.5pt off 4.5pt}]  (119.25,49.25) -- (102.25,111.25) ;
%Straight Lines [id:da7226844630280271] 
\draw    (158.25,79.25) -- (91.25,71.25) ;
% Text Node
\draw (37,73) node [anchor=north west][inner sep=0.75pt]   [align=left] {v};
% Text Node
\draw (160,75) node [anchor=north west][inner sep=0.75pt]   [align=left] {w};
\end{tikzpicture}
    \end{center}
        
        $lk(v)$ is a pure $2k-1$-\di al complex that has $2k+1$ (or $2k$)  vertices, thus it is either the $(2k-1)$-\di al sphere or contractible. If it is contractible, erasing $v$ preserves the homotopy type.
        Similarly for $w$.
        If one of $v,w$ was erased- we return to the case where at most 1 vertex was added. Otherwise, $lk(v)=lk(w)=\partial(\Delta^{2k})$, and the only remaining question is whether the $2k+1$ vertices other than $v,w$ form a simplex. If they do, the component is homotopic to a wedge of 2 spheres. Otherwise, it is homotopic to a sphere.
        In both cases, there is no cup product.
        
        \item[\textbf{3}:] In this case the remaining budget is $2-\frac{1}{k+1}+3(\frac{2}{k+1}-1)=\frac{5}{k+1}-1$, thus this case is only relevant for $k=2,3$, for which the remaining budget is less than $\frac{2}{3},\frac{1}{4}$ respectively. This means that at most 1 edge can be added (and 0 if $k=3$) besides the vertex adding operations. Denote this edge by $e$ and its endpoints by $x,y$.
        
        Any expansion operation besides adding vertices or $e$ must be supported on $2k+1$ vertices whose 1-skeleton forms a clique.
        This is only possible if this clique includes $e$, otherwise we are adding an existing simplex. 
        
        Therefore, all operations apart from vertex additions must happen after adding $e$. Furthermore, we may assume the vertex adding operations occurred before adding $e$. 
        This is because if $w$ was the last vertex added after $e$, $lk(w)$ in $C$ must be $\Delta^{2k-1}$, as no later operation can include $w$.
        $w$ therefore can be erased while preserving strong connectivity, returning us to the previous case.
        
        If any additional operation must include $x,y$ and only their common neighbors, how many of those exist? Assume WLOG that in the growth process $C_0\subset C_1\subset...\subset C$, $y$ was added after $x$ and appears first in $C_y$. 
        $x$ and $y$ may have at most $2k$ common neighbors in $C_y$ (as this is the number of neighbors of $y$). 
        Any vertex $v$ added later may not be a common neighbor, since the operation $C_j\cup_Q\Delta^{2k}$ adding $v$ must have $Q=\Delta^{2k-1}$, which before adding $e$ cannot contain both $x,y$.
        
        Thus, all operations after the addition of $e$ will only be attached on a total of $2k+2$ vertices ($x,y$ and their common neighbors). The remaining 2 vertices will have contractible link, and thus can be erased without changing the homotopy type. 
        This new complex, $C''$ is either $\partial(\Delta^{2k+1})$ or has $H^{2k}(C'')=0$. At any rate, $C''$ has cup length 1, and therefore so does $C$.
        \item[\textbf{4}:] This is only relevant for $k=2$, where we remain with a budget of less than $\frac{1}{3}$. Thus, no edge can be added, so only vertex adding operations occur in the growing process. Therefore, the component is contractible.\qedhere
    \end{enumerate}
\end{proof}
\begin{remark}
    To avoid confusion it is important to stress that the proposition only addresses strongly connected $\textbf{2k}$-components. 
    The proof that $H^l\neq 0$ for (in our case) $k\le l<2k$ in \cite{Fowler19} involves proving the existence of $\partial\Delta^l$ as a subcomplex of $X$ with a maximal face. 
    This maximal face is by definition not contained in any strongly connected component of dimension greater than $l$.
\end{remark}

\section{Steenrod Squares and Powers}\label{steenChap}
\begin{proof}[Proof of \Cref{thmSteenrod}]
\begin{lemma}\label{constCSigma}
    Consider $\sigma\in A_p$, the Steenrod algebra for the prime $p$. Then there exists a strongly connected pure-dimensional simplicial complex $C_\sigma$ with $\sigma:H^*(C_\sigma;\Z/p)\to H^{*+\deg(\sigma)}(C_\sigma;\Z/p)$ a non-trivial map.
\end{lemma}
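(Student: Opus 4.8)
The plan is to first exhibit \emph{some} space on which $\sigma$ acts non-trivially, and then to carve out of it a finite, pure-dimensional, strongly connected simplicial piece using the concentration principle of \Cref{lemCupSteenConc}. The only genuinely topological input is the faithfulness of the Steenrod algebra; everything after that is an application of the machinery already assembled in \Cref{prelim}.

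First I would invoke the classical computation of Serre and Cartan of the mod-$p$ cohomology of Eilenberg--MacLane spaces. Write $\sigma$ as a finite, nonzero, homogeneous linear combination of admissible monomials and set $r=\deg(\sigma)$. Choose an integer $m>r$ and let $\iota_m\in H^m(K(\Z/p,m);\F_p)$ be the fundamental class. By Serre--Cartan, $H^*(K(\Z/p,m);\F_p)$ is a free graded-commutative algebra whose generators are the admissible monomials of excess $<m$ applied to $\iota_m$. Since the excess of an admissible monomial never exceeds its degree, every monomial occurring in $\sigma$ has excess $\le r<m$, so the classes it produces are \emph{distinct polynomial generators}, hence linearly independent; as all of them sit in degree $m+r$, the nonzero combination gives $\sigma(\iota_m)\neq 0$ in $H^{m+r}(K(\Z/p,m);\F_p)$. (For $p=2$ one could instead detect $\sigma$ on $(\R P^\infty)^{\times N}$ via the Cartan formula, but the Eilenberg--MacLane statement is cleaner to cite.) This records exactly that the Steenrod algebra acts faithfully, so we have a space and a class detecting $\sigma$.

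Next I would make this finite and simplicial, and then extract a good component. Both $\iota_m$ and $\sigma(\iota_m)$ live in dimensions $\le m+r=:d$, so their restrictions to a sufficiently high finite skeleton of $K(\Z/p,m)$ are defined, and by naturality of $\sigma$ and injectivity of restriction in this range, $\sigma$ stays non-trivial there. Triangulating that finite skeleton (a finite $CW$ complex is homotopy equivalent to a finite simplicial complex) yields a finite simplicial complex $Y$ with $z\in H^m(Y;\F_p)$ and $\sigma(z)\neq 0\in H^d(Y;\F_p)$. Passing to the simplicial $d$-skeleton $sk^d Y$ fixes all lower cohomology and only enlarges $H^d$, so by \Cref{ez} and naturality $\sigma(z|_{sk^dY})=(\sigma z)|_{sk^dY}\neq 0$, and now $d$ is the top dimension. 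Finally I apply \Cref{lemCupSteenConc}(1): since $\sigma$ lands non-trivially in the top cohomology $H^d(sk^dY)$, its restriction to some $d$-dimensional strong connectivity component $C$ is non-trivial, i.e.\ $(\sigma z)|_C=\sigma(z|_C)\neq 0$. By \Cref{defStrongC}, $C$ consists of one class of $d$-simplices together with their faces, so it is pure $d$-dimensional and strongly connected. Setting $C_\sigma:=C$ finishes the proof.

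The crux is the second paragraph: the existence of \emph{any} detecting space is precisely faithfulness of the Steenrod algebra, and it is the one place where genuine algebraic-topology input must be imported rather than the combinatorial--probabilistic machinery of this paper. Everything downstream---finiteness, triangulation, truncation to the $d$-skeleton, and extraction of a pure strongly connected component---is routine once \Cref{lemCupSteenConc} is in hand, since that lemma is tailored to deduce non-triviality on a single strong connectivity component from non-triviality in top-dimensional cohomology. The only degenerate case is $\deg(\sigma)=0$ (a nonzero scalar multiple of the identity), where one may simply take $C_\sigma=\partial\Delta^{d+1}\cong S^d$ for any $d\ge 1$, which is pure $d$-dimensional, strongly connected, and has $\sigma$ acting as a nonzero scalar on $H^d\neq 0$.
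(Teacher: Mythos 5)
Your proposal is correct and takes essentially the same route as the paper: detect $\sigma$ on an Eilenberg--MacLane space via the Serre--Cartan structure of $H^*(K(\Z/p,m);\F_p)$ (the paper cites this excess argument from \cite[p.~500]{AT}), pass to a finite truncation, triangulate using the fact that a finite CW complex is homotopy equivalent to a finite simplicial complex, and extract a pure-dimensional strong connectivity component via \Cref{lemCupSteenConc}. The only point you gloss over is that taking a \emph{finite} skeleton of $K(\Z/p,m)$ requires a CW model that is dimension-wise finite, which is precisely what the paper cites \cite{BERGER07} for; your additional steps (restricting to the simplicial $d$-skeleton and treating $\deg(\sigma)=0$ separately) are harmless refinements of the same argument.
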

\begin{proof}
     From \cite[p.500]{AT} we know that there exists $c\in \N_{\ge 0}$ such that for $\iota$, a generator of $H^c(K(\Z/p,c);\Z/p)$, 
     $\sigma(\iota)$ is non-trivial in $H^*(K(\Z/p,c);\Z/p)$.
     Furthermore, we know from \cite[Prop. 4.10.1]{BERGER07} that $K(\Z/p,c)$ has a dimension-wise finite CW structure. 
     Therefore, by taking the $c+\deg(\sigma)$ skeleton of this structure, we would get a finite CW-complex with $\sigma$ acting non-trivially on cohomology (as both $\iota$ and $\sigma(\iota)$ would be unaffected by the truncation).
     \cite[Theorem 2C.5]{AT} says that every finite CW complex is homotopy equivalent to a finite simplicial complex, and by \Cref{lemCupSteenConc} we may restrict ourselves to a strongly connected component on which $\sigma$ acts non-trivially, granting the result.
\end{proof}

There is no reason to believe that $C_\sigma$ appears as a subcomplex of $\underline{X}(n;n^{-\alpha_1},...)$ with any non-negligible probability. However, the next construction allows us to modify the complex such that it does:
  
\begin{construction}\label{conKoganSus}
    Let $C$ be a finite (strongly connected) simplicial complex. Define a new complex $C'$ as follows: add a vertex 
    $v$ as a cone point above $C$. Then add the simplex supported on all vertices of $C$.
    This results in a complex $C'$ homotopy equivalent to $\Sigma C$ by the following map: Let $\Sigma C$ be modeled as $(C\times [0,1]/C\times \{1\})/C\times \{0\}$. Then the map that sends $v$ to $1$ and all simplexes supported on $C$ to 0 is easily seen to be a homotopy equivalence.\par
    The above construction, however, does not preserve purity of dimension. To achieve that, assuming $\dim(C)=d$, define $\Sigma' C=sk_{d+1}(C')$. This is seen to preserve both purity of dimension and strong connectivity. 
    Moreover, for any $l\le d$, if $Im(\sigma)\subset H^l(C)$ is non-trivial, for $\sigma\in A_p$, then $Im(\sigma)\subset H^{l+1}(\Sigma'C)$ is non-trivial as well.
    This is since suspension preserves them, and erasing the higher dimensional simplexes only expands the set of cocycles in relevant \di s, thus satisfying the requirements in the corollary  to \Cref{lemCupSteenConc}. \par
    Notice that if $C$ has a complete $l$-skeleton, $\Sigma'C$ has a complete $l+1$-skeleton. In particular, $\Sigma'C$ always has a complete 1-skeleton if $\dim(C)\geq 1$.
\end{construction}

We now apply the construction to $C_\sigma$:
\begin{lemma}\label{lem:susGood}
    For any $C_\sigma$ as in \Cref{constCSigma}, there exist $\{\alpha_i\}_i$ and $r\in\N$ such that $E(\Sigma'^r C_\sigma\subset\underline{X}(n;n^{-\alpha_1},...))\to \infty$.
\end{lemma}

\begin{proof}
Assume $C_\sigma$ has $v$ vertices, $\dim(C_\sigma)=d$, and minimal non-trivial cohomology in \di \  $k$. Then $\Sigma'C_\sigma$ has $v+1$ vertices and, as we saw, a complete 1-skeleton. 
This means that $E(\Sigma' C_\sigma\subset\underline{X})\sim n^{v+1-\binom{v+1}{2}\alpha_1-\sum_{i=2} ^{d+1} a_i\alpha_i}$, where $a_i\in \N$. 
$$v+1-\binom{v+1}{2}\alpha_1-\sum_{i=2} ^{d+1} a_i\alpha_i=v+1-\frac{\binom{v+1}{2}}{k+2}S_1^{k+1}-\sum_{i=2} ^{d+1} b_{1,i}\alpha_i$$
and similarly
$$\log_nE(\Sigma'^r C_\sigma\subset\underline{X})\sim v+r-\binom{v+r}{2}\alpha_1-\sum_{i=2} ^{d+r+1} a_{r,i}\alpha_i=v+r-\frac{\binom{v+r}{2}}{k+r+1}S^{k+r}_1-\sum_{i=2} ^{d+r+1} b_{r,i}\alpha_i$$
for some $\crb{b_{r,i}}$. If $S_1^{k+r}\ge 1$ is very close to $1$ and $\crb{\alpha_i},i\ge 2$ are sufficiently close to $0$ (which is possible by taking $\alpha_1\approx\frac{1}{k+r+1}$), the expression will be positive if $v+r>\frac{\binom{v+r}{2}}{k+r+1}$, or equivalently $r>v-2k-3$.

 Thus, $E(\Sigma'^r C_\sigma\subset \underline{X})$ tends to infinity for a certain choice of $\alpha$'s, which we assume from now on.
 The conditions "$S_1^{k+r}\ge 1$ is very close to $1$ and $\crb{\alpha_i},i\ge 2$ are sufficiently close to $0$" translates to a  set of choices with non-empty interior for the $\alpha$'s, as required by \Cref{thmSteenrod}.
\end{proof}

\begin{lemma}\label{secondMom}
    For $d,v,r,k$ as above, denote $M_n:=\sum_{i\in \binom{[n]}{v+r}}\mathbb{1}_i$, where $\mathbb{1}_i$ is the indicator of whether the induced ordered complex on the vertices contains $\Sigma'^r C_\sigma$. For $\{\alpha_i\}_i$ such that $E(M_n)\to \infty$ and $S_1 ^{k+r}(\{\alpha_i\}_i)\geq 1$, $\underline{X}$ a.a.s contains more than $\frac{E(M_n)}{2}$ copies of $\Sigma'^r C_\sigma$.
\end{lemma}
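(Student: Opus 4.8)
The plan is to run the second moment method. Since $E(M_n)\to\infty$, Chebyshev's inequality gives
\[
P\!\left(M_n \le \tfrac{1}{2}E(M_n)\right) \le P\!\left(|M_n - E(M_n)| \ge \tfrac{1}{2}E(M_n)\right) \le \frac{4\,\mathrm{Var}(M_n)}{E(M_n)^2},
\]
so the whole statement reduces to proving $\mathrm{Var}(M_n) = o\!\left(E(M_n)^2\right)$. First I would expand $\mathrm{Var}(M_n)=\sum_{i,j}\mathrm{Cov}(\mathbb{1}_i,\mathbb{1}_j)$ over ordered $(v+r)$-sets. The key simplification is that whenever $i$ and $j$ are vertex-disjoint, the two candidate copies of $\Sigma'^r C_\sigma$ are supported on disjoint collections of simplices, which are chosen independently in the model, so $\mathrm{Cov}(\mathbb{1}_i,\mathbb{1}_j)=0$. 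Hence only pairs sharing $t\ge 1$ vertices contribute.

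For such a pair the simplices demanded by both copies form (an isomorphic copy of) a subcomplex $W\subset \Sigma'^r C_\sigma$ supported on the shared vertices, and since distinct simplices are independent, $E(\mathbb{1}_i\mathbb{1}_j)=\Theta\!\big(E(\mathbb{1}_i)E(\mathbb{1}_j)/P(W\subset X)\big)$ (there are only finitely many alignments, absorbed into the constant). The number of overlapping pairs realizing a fixed type $W$ on $t$ vertices is $\Theta(n^{2(v+r)-t})$, while $E(M_n)^2=\Theta(n^{2(v+r)})\,P(\mathrm{one\ copy})^2$. Collecting the diagonal terms ($i=j$) and the off-diagonal ones, this yields
\[
\frac{\mathrm{Var}(M_n)}{E(M_n)^2} \;\lesssim\; \frac{1}{E(M_n)} + \sum_{W} \frac{1}{E(W\subset X)},
\]
where $W$ ranges over the \emph{finitely many} nonempty subcomplexes of $\Sigma'^r C_\sigma$. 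Because the number of summands does not depend on $n$, it suffices to show that $E(W\subset X)\to\infty$ for each such $W$.

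Thus everything comes down to a \textbf{balance} statement: every nonempty subcomplex $W$ of $F:=\Sigma'^r C_\sigma$ has positive $\log_n$-exponent (equivalently $E(W\subset X)\to\infty$). The approach I would take exploits that $F$ is strongly connected and pure $(d+r)$-dimensional and is obtained from a single top-dimensional simplex by expansion operations, together with the fact that $d+r\ge k+r$ and $S_1^{m}(\{\alpha_i\})\ge S_1^{k+r}(\{\alpha_i\})\ge 1$ for all $m\ge k+r$ (monotonicity of $S_1^{\bullet}$). By \Cref{opDecreasesE}, under these hypotheses every vertex-adding expansion is expectation-non-increasing and every face-adding one is strictly expectation-decreasing. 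I would then extend $W$ up to $F$ by adjoining the missing vertices in an order in which each newly adjoined vertex completes a top-dimensional simplex with already-present vertices; this is feasible because, by \Cref{conKoganSus}, $F$ has a complete $1$-skeleton and is strongly connected, so a top simplex can always be closed off at each stage. Each such step multiplies the expected count by $n^{\le 0}$, giving $E(W\subset X)\ge E(F\subset X)=E(M_n)\to\infty$.

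The main obstacle is precisely this last balance step. Two difficulties must be handled: first, producing an \emph{admissible elimination order} for an arbitrary induced subcomplex $W$ (arbitrary vertex subsets may force a careful, strong-connectivity–based ordering rather than the naive one); and second, the borderline regime $S_1^{k+r}(\{\alpha_i\})=1$, in which a vertex-adding step only preserves rather than strictly lowers the expectation, so that $E(W\subset X)$ and $E(M_n)$ sit at the same exponent. In that case one must extract strict slack either from the face-adding operations that accompany the vertex additions (each costing a positive $\alpha_l>0$, since $F$ carries genuine higher-dimensional simplices) or from the divergence $E(M_n)\to\infty$ itself, in order to conclude that the finite sum $\sum_W 1/E(W\subset X)$ tends to $0$ and hence that $\mathrm{Var}(M_n)=o(E(M_n)^2)$.
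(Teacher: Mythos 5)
Your overall frame coincides with the paper's: Chebyshev's inequality, vanishing covariance for (near-)disjoint pairs, and the reduction of $\mathrm{Var}(M_n)=o\left(E(M_n)^2\right)$ to a statement about overlaps. Moreover, your repackaging of the overlap terms as $\sum_{W} 1/E(W\subset X)$, with $W$ running over the finitely many nonempty subcomplexes of $F:=\Sigma'^r C_\sigma$, is correct and is essentially the paper's own decomposition in disguise: the paper's factor $n^{q_m-m}$ for an overlap on $m$ vertices is exactly $1/E(\mathrm{overlap}\subset X)$. The problem is entirely in your proof of the resulting ``balance'' claim that every nonempty $W\subset F$ satisfies $E(W\subset X)\to\infty$.

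Your argument for balance is to order the vertices of $F$ missing from $W$ so that each newly adjoined vertex completes a $(d+r)$-simplex with already-present vertices, concluding $E(W\subset X)\ge E(F\subset X)=E(M_n)$. This fails in two ways. First, feasibility: if $W$ has fewer than $d+r$ vertices, no top-dimensional simplex of $F$ can have all but one vertex already present, so there is no admissible first step; a complete $1$-skeleton and strong connectivity do not supply the property you need. Second, and more fundamentally, the inequality you are driving at is false in the relevant parameter range, so no repair of the ordering argument can establish it: for $W$ a single vertex, $E(W\subset X)=n$ (for an edge, $n^{2-\alpha_1}$), whereas under the hypotheses of the lemma $\log_n E(M_n)$ may exceed $1$ (indeed, in the parameter regime of the preceding lemma it grows roughly linearly in $r$), so $E(W\subset X)\ge E(M_n)$ simply does not hold for small $W$. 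All that is true --- and all that is needed --- is $E(W\subset X)\to\infty$, possibly much more slowly than $E(M_n)$, and for small $W$ this requires a different argument. This is precisely where the paper diverges from you: for overlaps on $m\le d+r+1$ vertices it bounds the overlap weight by that of a full simplex, $E(W\subset X)\ge E(\Delta^{m-1}\subset X)$, and shows the latter diverges (this follows from $E(\Delta^{d+r}\subset X)=\Omega(E(M_n))$, via \Cref{opDecreasesE} applied to the growth process of $F$, together with the coefficient bound $\binom{m}{j}\le \tfrac{m}{d+r+1}\binom{d+r+1}{j}$ for $j\ge 1$); for overlaps on $m\ge d+r+2$ vertices it sidesteps subcomplex counts altogether, observing that the union of the two copies is a strongly connected complex properly containing $F$, whose expected count is $O(E(M_n))=o\left(E(M_n)^2\right)$ by \Cref{opDecreasesE}. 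Note also that in this organization your worry about the borderline case $S_1^{k+r}(\{\alpha_i\}_i)=1$ evaporates: a contribution of order $E(M_n)$ is harmless precisely because $E(M_n)\to\infty$.
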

\begin{proof}
        We use the second moment method. 
    By Chebyshov's inequality $$P\prs{|M_n-E(M_n)|\geq \frac{E(M_n)}{2}}\leq 4\frac{Var(M_n) }{E^2(M_n)}$$ 
    In other words, we wish to show that $Var(M_n)=o(E^2(M_n))$.  
    $$Var(M_n)=\sum_{i,j \in \binom{[n]}{v+r}} (E(\mathbb{1}_i\mathbb{1}_j)-E(\mathbb{1}_i)E(\mathbb{1}_j))=\sum_{i,j \in \binom{[n]}{v+r}}Cov(\mathbb{1}_i,\mathbb{1}_j),$$
    so we prove the claim by analyzing the summands based on the size of the intersection $|i\cap j|=:m$:
   
    \begin{itemize}
        \item $Cov(\mathbb{1}_i,\mathbb{1}_j)=0$ when $|i\cap j|=0,1$, since there is no simplex of positive \di \  common to both.
        \item If $2\leq m\leq d+r+1$, then $\sum_{|i\cap j|=m} E(\mathbb{1}_i\mathbb{1}_j)$ 
        %(we address $E(\mathbb{1}_i)E(\mathbb{1}_j)$ later) 
        is equal to (up to a scalar bounded by $\binom{v+d+1}{m}^2$) 
        $$\sum_{W\subset \Sigma'^r C} E(\Sigma'^r C_\sigma\cup_W \Sigma'^r C_\sigma\subset \underline{X}),|W|=m.$$
        Denoting $E(\mathbb{1}_j):=\Theta(n^{-c})$, by \Cref{propBudCostLower} $\sum_{|i\cap j|=m} E(\mathbb{1}_i\mathbb{1}_j)=\Theta(n^{2(v+r)-m-2c+q_m})$, where $q_m\leq \sum_{l=1} ^{m-1} \binom{m}{l+1}\alpha_l$.
        But by \di ality 
        $$n^{m-\sum_{l=1} ^{m-1} \binom{m}{l+1}\alpha_l}\sim E(\Delta^{m-1}\subset\underline{X})=\omega(E(\Sigma'^r C_\sigma\subset\underline{X}))=\omega(1),$$
        and we know that $\sum_{l=1} ^{m-1} \binom{m}{l+1}\alpha_l-m<0$, thus $n^{-2c+q_m-m}=o(n^{2(v+r)-2c})$. 
        The number of isomorphism types of such intersections does not depend on $n$, and so this part of the sum is $o(n^{2(v+r)-2c})=o(E(M_n)^2)$.

        \item If $d+r+2\leq m< v+r$, then $\mathbb{1}_i\cdot\mathbb{1}_j$ is an indicator of a strongly connected complex as $\Sigma'^r C_\sigma$ has, apart from the last vertex, a complete $d+r$ skeleton. 
        Such a complex forms an expansion of $\Sigma'^r C_\sigma$, so by \Cref{opDecreasesE} ($S_1^{d+r}>S_1^{k+r}$) and the fact that $S_1 ^{k+r}(\{\alpha_i\}_i)\geq 1$, $\sum_{|i\cap j|=m}E(\mathbb{1}_i\mathbb{1}_j)$ is (up to a constant) a sum over all isomorphism types where each summand is $o(n^{v+r-c})$. The number of summands is independent of $n$, so $\sum_{|i\cap j|=m}E(\mathbb{1}_i\mathbb{1}_j)=o(E(M_n))$.
        \item In the case where the intersection is full, we have $E(M_n)$ as the sum, which is certainly $o(E(M_n)^2)$.
        \item $E(\mathbb{1}_i)E(\mathbb{1}_j)$ where $m\geq 2$ may be ignored since their contribution is $\Theta(n^{2v+2r-2-2c})=o(E(M_n)^2)$.
    \end{itemize}
    All in all, we have that $$P\prs{|M_n-E(M_n)|\geq \frac{E(M_n)}{2}}\leq 4\frac{Var(M_n) }{E^2(M_n)}\xrightarrow{n\to \infty} 0$$ and we get the result. 
    \end{proof}
    
    \begin{lemma}\label{lem:maxComponents}
        For positive  $\{\alpha_i\}_i$ such that $E(M_n)\to \infty$ and $S_1 ^{k+r}(\{\alpha_i\}_i)> 1$, $\underline{X}$ a.a.s contains a subcomplex isomorphic to $\Sigma'^r C_\sigma$, whose $k+r+1$ skeleton is a strongly connected component.
    \end{lemma}
    
    \begin{proof}
    As we saw in \Cref{secondMom}, $\underline{X}$ a.a.s contains $\Theta(E(M_n))$ copies of $\Sigma'^r C_\sigma$ as subcomplexes. Let $D$ be a complex whose $k+r+1$ skeleton is strongly connected, containing $\Sigma'^r C_\sigma$ as a proper subcomplex. 
    Denote by $N_n:=\sum_{j\in \binom{[n]}{|D|}}\mathbb{1}_j$, where $\mathbb{1}_j$ is the indicator of whether the induced ordered complex on $j$ contains $D$. Since $S^{k+r} _1(\{\alpha_i\}_i)> 1$, $E(N_n)=o(E(M_n))$ by \Cref{opDecreasesE}. Thus, by Markov's inequality, 
    $$P(N_n\geq E(M_n)/m)\leq \frac{mE(N_n)}{E(M_n)}\xrightarrow[n\to \infty]{}0$$
     for any $m>0$. This is true for any such $D$, so in particular for any $D$ which, in addition to $\Sigma'^r C_\sigma$, has at most one simplex of \di\ greater than $k+r$. There is a finite number of such complexes, so by \Cref{secondMom} there a.a.s must exist a copy of $\Sigma'^r C_\sigma$ whose $k+r+1$ skeleton is a strongly connected component.
    \end{proof}
    Thus, for any cocycle $c\in H^{k+r}(\Sigma'^r C_\sigma)$ and any copy of $\Sigma'^r C_\sigma$  whose $k+r+1$ skeleton is a strongly connected component in $\underline{X}$, both $c$ and $\sigma (c)$ may be extended to cocycles in $\underline{X}$ (by extending them as 0 to the rest of the complex), and by applying \Cref{lemCupSteenConc} we get the result.
\end{proof}
\begin{example}
    For  $\R P^2$-the projective plane- it is known that $\beta=Sq^1:H^1(\R P^2;\Z/2)\to H^2(\R P^2;\Z/2)$ is a non-zero map.
    
    In \Cref{fig:RP2} we see the minimal triangulation of $\R P^2$, obtained by quotienting the icosahedron by the antipodal map. Denoting this complex by $P$, it has 6 vertices, 15 edges and 10 2-faces, so $E(P\subset\underline{X})\sim n^{6-15\alpha_1-10\alpha_2}$.

    In order for $P$ to contribute to $H^1$, we must assume that $S_1^1>1$ as we did in \Cref{lem:maxComponents}. $S_1^1=2\alpha_1+\alpha_2$, so    $6-15\alpha_1-10\alpha_2=6-6S_1^1-3\alpha_1-4\alpha_2<0$, thus $P$ does not appear as a component in $\underline{X}$.

    Following \Cref{conKoganSus}, $\Sigma'P$ has 7 vertices, $\binom{6}{2}+6=21$ edges, $\binom{6}{3}+15=35$ 2-faces and $\binom{6}{4}+10=25$ 3-faces. $S_1^2=3\alpha_1+3\alpha_2+\alpha_3$, and so 
    $$E(\Sigma'P\subset\underline{X})\sim n^{7-21\alpha_1-35\alpha_2-25\alpha_3}=n^{7-7S_1^2 -14\alpha_2-18\alpha_3}\to 0$$
    if $S_1^2>1$.

    By the same method and recalling $S_1^3=4\alpha_1+6\alpha_2+4\alpha_3+\alpha_4$:
    $$E(\Sigma'\Sigma'P\subset\underline{X})\sim n^{8-28\alpha_1-56\alpha_2-70\alpha_3-46\alpha_4}=n^{8-7S_1^3 -14\alpha_2-42\alpha_3-39\alpha_4}.$$
    Assuming $S_1^3$ is sufficiently close to 1, and all $\alpha_i,i\ge 2$ are sufficiently close to 0 (for instance when $\alpha_1=\frac{1}{4},\alpha_{2,3,4}<0.001$), $E(\Sigma'\Sigma'P\subset\underline{X})\to \infty$, as predicted in \Cref{lem:susGood}.    
\begin{figure}
    \centering
\begin{tikzpicture}[scale=2, line join=round]
    % Hexagon vertices
    \foreach \i in {0,...,5} {
        \coordinate (H\i) at ({cos(30+60*\i)}, {sin(30+60*\i)});
    }
    \foreach \i in {0,...,5} {
        \node  at ({1.1*cos(30+60*\i)}, {1.1*sin(30+60*\i)}) {\fpeval{round(3*(\i/3-trunc(\i/3)))+1}};
    }

    \draw[thick] (H0) -- (H1) -- (H2) -- (H3) -- (H4) -- (H5) -- cycle;

    % Inner triangle, half-scale toward the center
    \foreach \i/\j in {0/T0, 2/T1, 4/T2} {
        \coordinate (\j) at ({0.65*cos(30+60*\i)}, {0.65*sin(30+60*\i)});
    }
    \foreach \i in {0,1,2} {
        \node at ({0.5*cos(30+120*\i)}, {0.5*sin(30+120*\i)}) {\fpeval{\i+4}};
    }
    \draw[thick] (T0) -- (T1) -- (T2) -- cycle;
    \foreach \i in {0,1,2}% [evaluate=\i as \j using 2*\i]
    {
        \draw[thick] (T\i)--(H\fpeval{2*\i});
        \draw[thick] (H\fpeval{2*\i+1})--(T\i);
    }
    \draw[thick] (T0)--(H5);
    \draw[thick] (T1)--(H1);
    \draw[thick] (T2)--(H3);
\end{tikzpicture}
    \caption{The minimal triangulation of $\R P^2$}
    \label{fig:RP2}
   
\end{figure}
    
\end{example}

\section{Concluding Remarks and Open Problems}\label{discus}

\begin{itemize}
    \item In \Cref{thmSteenrod} we only address the question of whether the Steenrod operations act trivially or not. 
    We might ask a more quantitative question about the behavior of their rank. Certainly it tends to $\infty$ (in the range found in \Cref{thmSteenrod}), and certainly their "relative rank" ($\frac{rank(\sigma)}{\dim(H^d (\underline{X}))}$), for any $d$ such that $S^d _1(\{\alpha_i\}_i)\geq 1$, tends to 0.
    This is since, from similar arguments to the proof of \Cref{thmSteenrod}, the asymptotically largest contributor to $d$-cohomology is always the simplest, which is empty simplexes with a maximal face.\par
    For the question of the asymptotic rate of growth of the rank, we gave an answer depending on minimal triangulations of spaces with $\sigma$ acting non-trivially, for which the author is not aware of any mention in the literature. The answer to the rate of convergence of $\frac{rank(\sigma)}{\dim(H^d (\underline{X}))}$ to 0 should then follow from the prevalence of this minimal triangulation contrasted with that of empty simplexes with a maximal face.
    \item The results in this paper do not cover all possible parameters. For the cup product, the range of $\alpha$'s discussed in \Cref{thmcup} is dense in "$\alpha$-space", but does not include the case where $S^k _1 (\{\alpha_i\})=1$ for some $k$, nor does it include the case where some $\alpha$'s are 0 (or approach 0 slower than $n^{-\alpha}$, for example $\frac{1}{\log(n)}$).\par
    As for \Cref{steenChap}, we only proved that the algebra acts non-trivially for some range of $\alpha$'s, but have not commented on what happens in the complement. Furthermore, we did not say the exact \di s for which the image of $\sigma$ is non-zero (finding the minimal such \di \ is an interesting question), but only proved an upper bound depending on (again) the minimal triangulation of a complex with non-zero $\sigma$.
    \item Similar questions about other models of random simplicial complexes are left for future publications. Furthermore, we do not address the appearance of non-zero secondary operations, tertiary operations and so on  (such as the Massey products of various lengths), although those should depend on strongly connected components in a similar way.
\end{itemize}

\printbibliography

\end{document}